\begin{document}
\definecolor{applegreen}{rgb}{0.55, 0.71, 0.0}
\title{Travelling wave solutions in a negative nonlinear diffusion-reaction model}

\author{Yifei Li$^1$\and
        Peter van Heijster$^1$\and
         Robert Marangell$^2$\and
         Matthew J. Simpson$^1$  
}
\institute{
\Letter \quad Peter van Heijster\\
\phantom{\Letter}\quad 
petrus.vanheijster@qut.edu.au\\
\\$^1$ \quad School of Mathematical Sciences, Queensland University of   Technology, Brisbane, Australia\\ \\
       $^2$ \quad School of Mathematics and Statistics, University of Sydney, Sydney, Australia
}

\date{Received: date / Accepted: date}
\maketitle
\begin{abstract}
We use a geometric approach to prove the existence of smooth travelling wave solutions of a nonlinear diffusion-reaction equation with logistic kinetics and a convex nonlinear diffusivity function which changes sign twice in our domain of interest. We determine the minimum wave speed, $c^*$, and investigate its relation to the spectral stability of the travelling wave solutions.
\keywords{nonlinear diffusion \and travelling wave solutions \and geometric methods \and phase plane analysis \and spectral stability}
 \subclass{92C17 \and 92D25 \and  35K57  \and 35B35}
\end{abstract}

\section{Introduction}
Invasion processes have been studied with mathematical models, especially partial differential equations (PDEs), for many years; see, for example, \citet{murray2002mathematical} and references therein. These models describe, for instance, how cells are transported to new areas in which they persist, proliferate, and spread \citep{mack2000biotic}. To incorporate information about individual-level behaviours in invasion processes, lattice-based discrete models are widely used \citep{deroulers2009modeling,johnston2017co,johnston2012mean,simpson2010migration}. In these discrete models, individual agents are permitted to move, proliferate and die on a lattice, and the average density of agents is related to PDE descriptions obtained using truncated Taylor series in the continuum limit \citep{anguige2009one,codling2008random}. The macroscopic behaviour described by the PDEs in terms of expected agent density reflects the individual microscopic behaviour. Travelling wave solutions are of particular interest among the macroscopic behaviours arising from these continuum models, as they reflect various modes of microscopic invasive behaviours. One famous model exhibiting travelling wave solutions is the Fisher-KPP equation (KPP refers to Kolmogorov, Petrovsky, Piskunov) proposed in 1937 to study population dynamics with linear diffusion and logistic growth \citep{fisher1937wave,ararticle}. The existence and stability of travelling wave solutions of the Fisher-KPP equation has been widely studied, see, for instance, \citet{aronson1978multidimensional,fisher1937wave,harley2015numerical,ararticle,larson1978transient,murray2002mathematical} and \citet{sherratt1998transition}.

The Fisher-KPP equation can be derived as a continuum limit of a discrete model under the assumption that the population of cells can be treated as a uniform population without any differences in subpopulations \citep{bramson1986microscopic}. However, differences between individual and collective behaviour have been observed in cell biology and ecology in practice. For instance, in cell biology, isolated cells called \emph{leader cells} are more motile than the grouped cells, called \emph{follower cells} \citep{poujade2007collective}. Also, contact interactions lead to different motility rates between isolated cells and grouped cells in the migration of breast cancer cells \citep{simpson2010migration,simpson2014pioneer}, glioma cells \citep{khain2011collective}, would healing processes \citep{Khain2007} and the development of the enteric nervous system \citep{druckenbrod2007behavior}. In ecology, the population growth rate of some species decreases as their populations reach small sizes or low densities \citep{courchamp1999inverse}. This phenomenon is usually referred to as the Allee effect \citep{allee1932studies}. 

To describe the invasion process and reflect the difference between collective and individual behaviour, Johnston and coworkers introduced a discrete model considering birth, death and movement events of agents that are isolated or grouped on a simple one-dimensional lattice \citep{johnston2017co}. A discrete conservation statement describing $\delta U_{j}$, which is the change of the occupancy of a lattice site $j$ during a time step $\tau$, gives
\begin{equation}
\begin{aligned}
    \delta U_j=&\frac{P^i_m}{2}[U_{j-1}(1-U_j)(1-U_{j-2})+U_{j+1}(1-U_j)(1-U_{j+2})\\
    &-2U_{j}(1-U_{j-1})(1-U_{j+1})]\\
    &+\frac{P^g_m}{2}[U_{j-1}(1-U_j)+U_{j+1}(1-U_j)-U_{j}(1-U_{j-1})-U_{j}(1-U_{j+1})]\\
    &-\frac{P^g_m}{2}[U_{j-1}(1-U_j)(1-U_{j-2})+U_{j+1}(1-U_j)(1-U_{j+2})\\
    &-2U_{j}(1-U_{j-1})(1-U_{j+1})]\\
    &+\frac{P^i_p}{2}[U_{j-1}(1-U_j)(1-U_{j-2})+U_{j+1}(1-U_j)(1-U_{j+2})]\\
    &+\frac{P^g_p}{2}[U_{j-1}(1-U_j)+U_{j+1}(1-U_j)]\\
    &-\frac{P^g_p}{2}[U_{j-1}(1-U_j)(1-U_{j-2})+U_{j+1}(1-U_j)(1-U_{j+2})]\\
    &-P^i_d[U_j(1-U_{j-1})(1-U_{j+1})]-P^g_dU_j+P^g_d[U_j(1-U_{j-1})(1-U_{j+1})].
\end{aligned}
\label{1}
\end{equation}
Here, $U_j$ represents the probability that an agent occupies lattice site $j$, thus, $1-U_j$ represents the probability that lattice site $j$ is vacant \citep{simpson2010cell}. $P^i_m$ and $P^g_m$ represents the probability per time step that isolated or grouped agents, respectively, attempt to step to a nearest neighbour lattice site; $P^i_p$ and $P^g_p$ represents the probability per time step that isolated or grouped agents, respectively, attempt to undergo a proliferation event and deposit a daughter agent at a nearest neighbour lattice site; $P_d^i$ and $P_d^g$ represents the probability per time step that isolated or grouped agents, respectively, die, and are removed from the lattice. See Figure \ref{D(u)andR(u)}a for a schematic of the lattice-based discrete model.

To obtain a continuous description, Johnston and coworkers treat $U_j$ as a continuous function, $U(x,t)$, and divide (\ref{1}) by the time step $\tau$. Next, they expanded all terms in (\ref{1}) in a Taylor series around $x=j\Delta$, where $\Delta$ is the lattice spacing, and neglect terms of $\mathcal{O}(\Delta^3)$ \citep{simpson2010cell}. As $\Delta\to0$ and $\tau\to0$ with the ratio ${\Delta^2}/{\tau}$ held constant \citep{codling2008random,simpson2010cell}, they obtained a nonlinear diffusion-reaction equation
\begin{equation}
\label{RDE_1}
    \begin{aligned}
    \frac{\partial U}{\partial t}=\frac{\partial}{\partial x}\left(D(U)\frac{\partial U}{\partial x}\right)+{R}\left(U\right),
    \end{aligned}
\end{equation} 
where 
\begin{equation}
\label{D(u)2}
D\left(U\right)=D_i\left(1-4U+3U^2\right)+D_g\left(4U-3U^2\right),
\end{equation}
is the nonlinear diffusivity function, and
\begin{equation}
\label{R(u)1}
R\left(U\right)=\lambda_g U\left(1-U\right)+\left(\lambda_i-\lambda_g-K_i+K_g\right)U\left(1-U\right)^2-K_g U,
\end{equation}
is the kinetic term. Furthermore, the parameters are given by
\begin{equation}
    \nonumber
    \begin{aligned}
    &D_g=\lim_{\Delta,\tau\to0}\frac{P^g_m\Delta^2}{2\tau},\quad
    &&D_i=\lim_{\Delta,\tau\to0}\frac{P^i_m\Delta^2}{2\tau},\quad
    &&&\lambda_g=\lim_{\tau\to0}\frac{P^g_p}{\tau},\\
    &\lambda_i=\lim_{\tau\to0}\frac{P^i_p}{\tau},\quad
    &&K_g=\lim_{\tau\to0}\frac{P^g_d}{\tau},\quad
    &&&K_i=\lim_{\tau\to0}\frac{P^i_d}{\tau},
    \end{aligned}
\end{equation}
where we require that $P^i_p,P^g_p,P^i_d,P^g_d$ are $\mathcal{O}(\tau)$ \citep{simpson2010cell}. Here, $U(x,t)$ denotes the total density of the agents at position $x\in\mathbb{R}$ and time $t\in\mathbb{R}_{+}$; $D_i\ge0$ and $D_g\ge0$ are diffusivities of the isolated and grouped agents, respectively; $\lambda_i\ge0$ and $\lambda_g\ge0$ are the proliferation rates of isolated and grouped agents, respectively; $K_i\ge0$ and $K_g\ge0$ are the death rates of isolated and grouped agents, respectively \citep{johnston2017co}.
\begin{figure}
\centering
\begin{tikzpicture}[x=4cm,y=3cm, scale=1]
\draw[thick] (0,0.9) -- (1.6,0.9);
\draw[thick] (0,1.4) -- (1.6,1.4);
\draw[thick] (0,0.9) -- (0,1.4);
\draw[thick] (1.6,0.9) -- (1.6,1.4);
\foreach \pos in {0.2,0.4,0.6,0.8,1,1.2,1.4}
\draw[thick] (\pos,0.9) -- (\pos,1.4);
\node [black,right] at (0,1.35) {$1$};
\node [black,right] at (0.2,1.35) {$2$};
\node [black,right] at (0.4,1.35) {$3$};
\node [black,right] at (0.6,1.35) {$4$};
\node [black,right] at (0.8,1.35) {$5$};
\node [black,right] at (1,1.35) {$6$};
\node [black,right] at (1.2,1.35) {$7$};
\node [black,right] at (1.4,1.35) {$8$};
\foreach \pos in {0.2,0.4,0.6,0.8,1,1.2,1.4}
\fill[pink] (0.3,1.15) circle (10pt);
\node [black] at (0.3,1.15) {$A$};
\fill[cyan] (0.9,1.15) circle (10pt);
\node [black] at (0.9,1.15) {$B$};
\fill[cyan] (1.1,1.15) circle (10pt);
\node [black] at (1.1,1.15) {$C$};
\fill[cyan] (1.3,1.15) circle (10pt);
\node [black] at (1.3,1.15) {$D$};
\draw[-latex] (0.25,1.5) -- (0.1,1.5);
\draw[-latex] (0.35,1.5) -- (0.5,1.5);
\draw[-latex] (0.85,1.5) -- (0.7,1.5);
\draw[-latex] (1.35,1.5) -- (1.5,1.5);
\node [black,above] at (0.13,1.5) {$\dfrac{P^i_m}{2}$};
\node [black,above] at (0.47,1.5) {$\dfrac{P^i_m}{2}$};
\node [black,above] at (0.73,1.5) {$\dfrac{P^g_m}{2}$};
\node [black,above] at (1.47,1.5) {$\dfrac{P^g_m}{2}$};
\draw[-latex] (0.1,0.8) -- (0.1,1);
\node [black,below] at (0.1,0.8) {$\dfrac{P^i_p}{2}$};
\draw[-latex] (0.5,0.8) -- (0.5,1);
\node [black,below] at (0.5,0.8) {$\dfrac{P^i_p}{2}$};
\draw[-latex] (0.7,0.8) -- (0.7,1);
\node [black,below] at (0.7,0.8) {$\dfrac{P^g_p}{2}$};
\draw[-latex] (1.5,0.8) -- (1.5,1);
\node [black,below] at (1.5,0.8) {$\dfrac{P^g_p}{2}$};
\draw[-latex] (0.3,1) -- (0.3,0.8);
\node [black,below] at (0.3,0.8) {$P^i_d$};
\draw[-latex] (0.9,1) -- (0.9,0.8);
\node [black,below] at (0.9,0.8) {$P^g_d$};
\draw[-latex] (1.1,1) -- (1.1,0.8);
\node [black,below] at (1.1,0.8) {$P^g_d$};
\draw[-latex] (1.3,1) -- (1.3,0.8);
\node [black,below] at (1.3,0.8) {$P^g_d$};
\draw[-latex, thick] (-0.1,1.2) -- (-0.1,0);
\node [black,left] at (-0.1,0.6) {$t+\tau$};
\draw[thick] (0,-0.2) -- (1.6,-0.2);
\draw[thick] (0,0.3) -- (1.6,0.3);
\draw[thick] (0,-0.2) -- (0,0.3);
\draw[thick] (1.6,-0.2) -- (1.6,0.3);
\foreach \pos in {0.2,0.4,0.6,0.8,1,1.2,1.4}
\draw[thick] (\pos,-0.2) -- (\pos,0.3);
\node [black,right] at (0,0.25) {$1$};
\node [black,right] at (0.2,0.25) {$2$};
\node [black,right] at (0.4,0.25) {$3$};
\node [black,right] at (0.6,0.25) {$4$};
\node [black,right] at (0.8,0.25) {$5$};
\node [black,right] at (1,0.25) {$6$};
\node [black,right] at (1.2,0.25) {$7$};
\node [black,right] at (1.4,0.25) {$8$};
\fill[pink] (0.3,0.05) circle (10pt);
\fill[pink] (0.7,0.05) circle (10pt);
\fill[cyan] (1.1,0.05) circle (10pt);
\fill[cyan] (1.3,0.05) circle (10pt);
\fill[cyan] (1.5,0.05) circle (10pt);
\node [black] at (0.3,0.05) {$A$};
\node [black] at (0.7,0.05) {$B$};
\node [black] at (1.1,0.05) {$C$};
\node [black] at (1.3,0.05) {$D$};
\node [black] at (1.5,0.05) {$E$};
\node [black,below] at (0.75,-0.2) {(a)};
\end{tikzpicture}
\hfill
\begin{tikzpicture}[x=6cm,y=3.5cm, scale=1]
\draw[thick] (0,-0.2) -- (0,1.5);
\draw[thick] (0,0) -- (1,0) 
node[right,very thick] {$U$} coordinate( x axis);
\node [black,above] at (0.83,0) {$\beta$};
\node [black,above] at (0.5,0) {$\alpha$};
\node [black,above] at (0.68,0) {$\frac{2}{3}$};
\node [black,right] at (0.4,1.35) {$R(U)$};
\node [black,right] at (0.25,0.4) {$D(U)$};
\draw [color=cyan,very thick] [label] plot file {D3.dat};
\draw [color=orange,very thick] [label] plot file {R1.dat};
\foreach \pos in {1}  
\draw[shift={(\pos,0)}] (0,0.01) -- (0,-0.01) node[below] {$\pos$};
\foreach \pos in {0}  
\draw[shift={(0,\pos)}] (0.01,0) -- (-0.01,0) node[left] {$\pos$};
\fill[black] (0.668,0) circle (2pt);
\fill[black] (0.5,0) circle (2pt);
\fill[black] (0.83,0) circle (2pt);
\node [black,below] at (0.5,-0.2) {(b)};
\end{tikzpicture}
\caption{(a) describes one possible time step of the lattice-based discrete model of \citet{johnston2017co}: a new grouped agent (agent E) is born and the grouped agent B moves from lattice site 5 to lattice site 4 to become an isolated agent. Pink circles represent isolated agents with birth rate $P^i_p$, death rate $P^i_d$ and motility rate related to $P^i_m$; cyan circles represent grouped agents with birth rate $P^g_p$, death rate $P^g_d$ and motility rate $P^g_m$. (b) presents a diffusivity function $D(U)$, given by (\ref{D(u)2}) (cyan curve) satisfying $D_i>4D_g$ which makes $D(U)$ change sign twice on $(0,1)$, and the kinetic term $R(U)$, given by (\ref{R(u)2}) (orange curve) which is positive on $(0,1)$ and zero at end points $U=0$ and $U=1$.}
\label{D(u)andR(u)}
\end{figure}
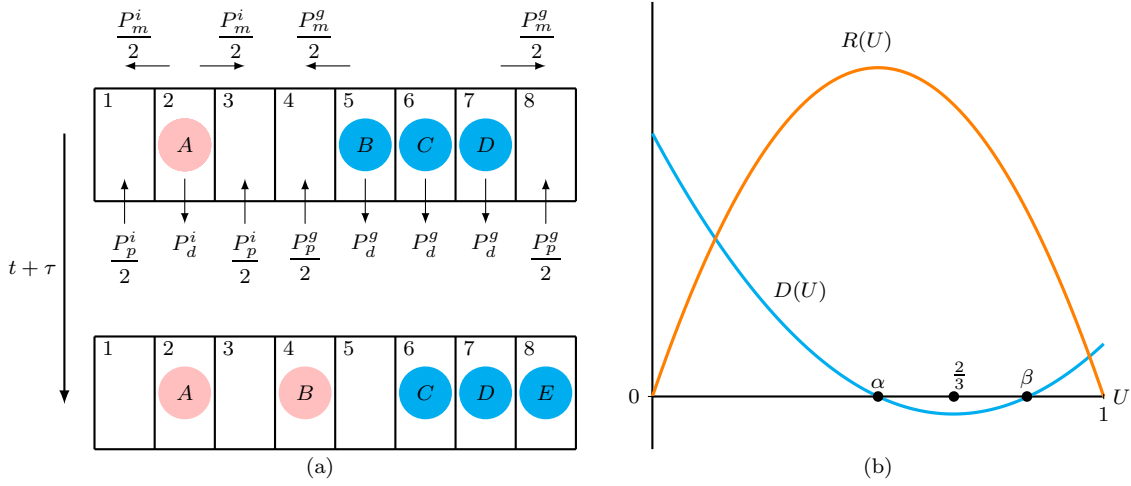
Note that this particular form (\ref{RDE_1}) 
was proposed by \citet{johnston2017co}.  
This was one of the first studies that proposed a nonlinear diffusion-reaction model to a mean-field description of a lattice-based stochastic model incorporating agent movement, proliferation and death.
%
Previous work leading to nonlinear diffusion equations only considered the movement of agents and thus did not involve kinetic terms \citep{johnston2012mean,anguige2009one}.

In this manuscript, we study the effect that aggregation, which is modelled with a nonlinear diffusivity function that goes negative \citep{simpson2010model}, has on the dynamics of the continuous PDE model. Therefore, we assume that $D_i>4D_g$ such that $D(U)$ given by (\ref{D(u)2}) is convex and changes sign twice in our domain of interest (additionally, see Section~\ref{SS:POS} for a short discussion related to the other case). For simplicity, we furthermore assume equal proliferation rates, $\lambda=\lambda_i=\lambda_g$, and no agent death, $K_i=K_g=0$. This way, the kinetic term simplifies to a logistic term
\begin{equation}
\label{R(u)2}
    R\left(U\right)=\lambda U\left(1-U\right),
\end{equation}
and $D\left(U\right)$ has a sign condition:
\begin{equation}
\label{signcondition1}
    D\left(U\right)>0 \quad \text{for}\quad U\in\left[0,\alpha\right)\cup\left(\beta,1\right],\quad D\left(U\right)<0\quad \text{for} \quad U\in\left(\alpha,\beta\right),
\end{equation}
where the interval where $D(U)<0$ is centred at $U=2/3$, and $\alpha, \beta$ are given by
\begin{equation} 
\label{X}
    \alpha=\frac{2}{3}-\frac{\sqrt{D_{i}^{2}+4D_{g}^2-5D_{i}D_{g}}}{3\left(D_i-D_g\right)},\quad\beta=\frac{2}{3}+\frac{\sqrt{D_{i}^{2}+4D_{g}^2-5D_{i}D_{g}}}{3\left(D_i-D_g\right)},
\end{equation}
with $1/3<\alpha<2/3$ and $2/3<\beta<1$, see Figure \ref{D(u)andR(u)}b. That is, we have negative diffusion for $U\in(\alpha,\beta)$. The relation that $D_i$ is larger than $D_g$ indicates that isolated agents are more active than grouped agents, which agrees with the experimental observation that \emph{leader cells} are more motile than \emph{follower cells} \citep{poujade2007collective,simpson2014pioneer}. 
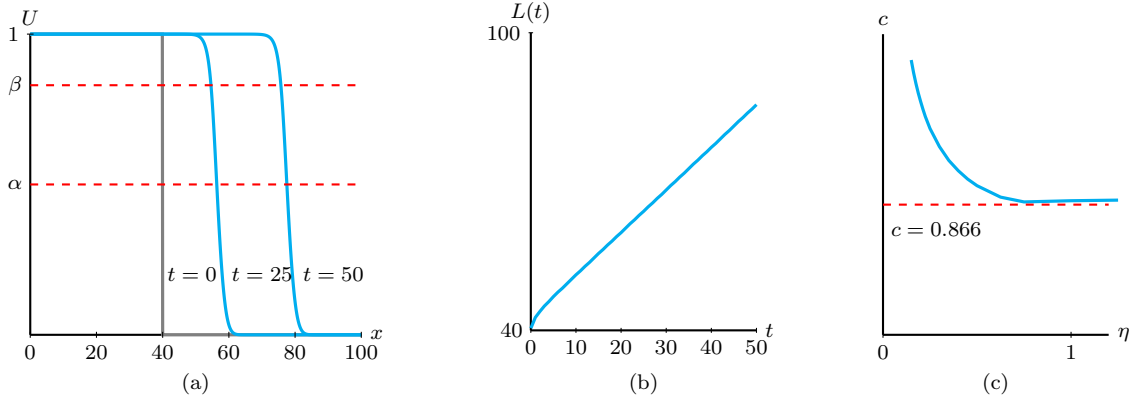
\begin{figure}
\centering
\begin{tikzpicture}[x=2.2cm,y=200cm, scale=0.02]
\draw[thick] (0,0) -- (0,1) node[above,very thick] {$U$} coordinate(y axis);
\draw[thick] (0,0) -- (100,0) 
node[right,very thick] {$x$} coordinate( x axis);
\draw [color=gray,very thick] [label] plot file {travellingwavesolution_1.dat};
\draw [color=cyan,very thick] [label] plot file {travellingwavesolution_2.dat};
\draw [color=cyan,very thick] [label] plot file {travellingwavesolution_3.dat};
\foreach \pos in {0,20,40,60,80,100}
\draw[shift={(\pos,0)}] (0,0.01) -- (0,-0.01) node[below] {$\pos$};
\foreach \pos in {1}  
\draw[shift={(0,\pos)}] (1,0) -- (-1,0) node[left] {$\pos$};
\node [black,right] at (39,0.2) {$t=0$};
\node [black,right] at (58.5,0.2) {$t=25$};
\node [black,right] at (80,0.2) {$t=50$};
\draw[thick,dashed,red] (0,0.83) -- (100,0.83);
\draw[thick,dashed,red] (0,0.5) -- (100,0.5);
\node [black,left] at (0,0.83) {$\beta$};
\node [black,left] at (0,0.5) {$\alpha$};
\node [black,below] at (50,-0.1) {(a)};
\end{tikzpicture}
\hfill
\begin{tikzpicture}[x=2cm,y=2.2cm, scale=0.03]
\draw[thick] (0,0) -- (0,60) node[above,very thick] {$L(t)$} coordinate(y axis);
\draw[thick] (0,0) -- (50,0) 
node[right,very thick] {$t$} coordinate( x axis);
\draw [color=cyan,very thick] [label] plot file {travellingwavesolution_speed.dat};
\foreach \pos in {0,10,20,30,40,50}  
\draw[shift={(\pos,0)}] (0,0.5) -- (0,-0.5) node[below] {$\pos$};
\draw[shift={(0,0)}] node[left] {$40$};
\draw[shift={(0,60)}] (0.5,0) -- (-0.5,0) node[left] {$100$};
\node [black,below] at (25,-7) {(b)};
\end{tikzpicture}
\hfill
\begin{tikzpicture}[x=2.5cm,y=2cm, scale=1]
\draw[thick] (0,0) -- (0,2) node[above,very thick] {$c$} coordinate(y axis);
\draw[thick] (0,0) -- (1.2,0) 
node[right,very thick] {$\eta$} coordinate( x axis);
\foreach \pos in {0,1}  
\draw[shift={(\pos,0)}] (0,0.02) -- (0,-0.02) node[below] {$\pos$};
\draw [color=cyan,very thick] [label] plot file {speedwithICs.dat};
\draw[thick,dashed,red] (0,0.866) -- (1.2,0.866);
\node [black,below] at (0.6,-0.2) {(c)};
\node [black,right] at (0,0.7) {$c=0.866$};
\end{tikzpicture}
\caption{(a) shows the evolution of a Heaviside initial condition to a smooth travelling wave solution obtained by simulating (\ref{RDE_1}) with (\ref{D(u)2}) and (\ref{R(u)2}) with parameters $D_i=0.25$, $D_g=0.05$ and $\lambda=0.75$. We use a finite difference method with space step $\delta x=0.1$, time step $\delta t=0.01$ and no-flux boundary conditions. Notice that $D(U)=0$ at $\alpha=0.5$ and $\beta\approx0.83$. (b) measures the position of the wave $L(t)$ by looking for the left-most leading edge point where $U$ is smaller than $10^{-5}$, indicating that the solution is travelling at a constant speed $c=0.864$. (c) gives the wave speed as a function of the initial condition $U(x,0)=1/2+\text{tanh}\left(-\eta (x-40)\right)/2$. Notice that as $\eta$ grows to infinity this initial condition limits to the Heaviside initial condition used for the simulation in (a), and the wave speed converges to $c\approx0.864$. The minimum wave speed $c^*=2\sqrt{\lambda D_i}\approx0.866$ (\ref{minimumwave}).}
\label{travellingwavesolutionpicture1}
\end{figure}
\citet{ferracuti2009travelling} showed the existence of travelling wave solutions for a range of positive wave speeds for (\ref{RDE_1}) with general convex $D(U)$ that changes sign twice on $(0,1)$ and $R(U)$ given by $(\ref{R(u)2})$ based on the \emph{comparison method} introduced by \citet{aronson1978multidimensional}. Related studies proved the existence of travelling wave solutions for a similar range of speeds for nonlinear diffusion-reaction equations with different $D(U)$ and different $R(U)$: \citet{malaguti2003sharp} studied (\ref{RDE_1}) with a logistic kinetic term and a nonlinear diffusivity function satisfying
\begin{equation}
\nonumber
    D(0)=0\quad\text{and}\quad D(0)>0\quad \text{for all}\quad U\in(0,1].
\end{equation}
\citet{maini2006diffusion} studied (\ref{RDE_1}) with a logistic kinetic term and a nonlinear diffusivity function satisfying
\begin{equation}
    \label{maini1}
    D(U)>0\quad\text{in}\quad(0,\theta)\quad \text{and}\quad D(U)<0 \quad \text{in}\quad U\in(\theta,1),
\end{equation}
for some given $\theta\in(0,1)$ and with $D(0)=D(\theta)=D(1)=0$. In addition, \cite{maini2007aggregative} studied (\ref{RDE_1}) with (\ref{maini1}) and a bistable kinetic term satisfying
\begin{equation}
\nonumber
    R(0)=R(\phi)=R(1)=0,\quad R(U)<0\quad \text{in} \quad U\in(0,\phi)\quad \text{and}\quad R(U)>0\quad \text{in} \quad U\in(\phi,1).
\end{equation}

A travelling wave solution of {(\ref{RDE_1})} is a solution that travels with constant speed $c>0$ and constant wave shape, and that asymptotes to $1$ as $x\to-\infty$ and to $0$ as $x\to\infty$ (i.e. the roots of $R(U)$).
We only consider positive wave speeds since (\ref{RDE_1}) with $(\ref{D(u)2})$ and $(\ref{R(u)2})$ is monostable with a Fisher-KPP imprint, that is, $U\equiv1$ is a PDE stable solution of (\ref{RDE_1}), while $U\equiv0$ is a PDE unstable solution (in an appropriate function space which will be introduced in Section~\ref{S:ST}). Hence, to study travelling wave solutions we introduce the travelling wave coordinate $z=x-ct$, where $z\in\mathbb{R}$ and $c>0$, and write (\ref{RDE_1}) in its travelling wave coordinate
\begin{equation}
\label{stabilitysec2}
    \frac{\partial U}{\partial t}=\frac{\partial}{\partial z}\left(D(U)\frac{\partial U}{\partial z}\right)+c\frac{\partial U}{\partial z}+R(U).
\end{equation}
A travelling wave solution is now a stationary solution to (\ref{stabilitysec2}), that is, ${\partial U}/{\partial t}=0$ \citep{sandstede2002stability}. In other words, a travelling wave solution is a solution to the second-order ordinary differential equation (ODE)
\begin{equation}
\label{ODE_1}
    \frac{d}{d z}\left(D(u)\frac{du}{dz}\right)+c\frac{du}{dz}+R(u)=0,
\end{equation}
with asymptotic boundary conditions $\displaystyle{\lim_{z\to-\infty}}u=1$ and $\displaystyle{\lim_{z\to\infty}}u=0$.

In this manuscript, we show the following result:
\begin{theorem} \label{THEOREM}
Model (\ref{RDE_1}) with (\ref{D(u)2}) and (\ref{R(u)2}) and $D_i>4D_g$ supports smooth monotone nonnegative travelling wave solutions for
\begin{equation}
    \label{minimumwave}
    c\ge2\sqrt{\lambda D_i}=:c^*.
\end{equation}
\end{theorem}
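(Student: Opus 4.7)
The plan is to use phase-plane methods after desingularising the points where $D$ vanishes. I rewrite the travelling-wave ODE (\ref{ODE_1}) as the first-order system $u' = v$, $D(u)v' = -D'(u)v^2 - cv - R(u)$, which is singular along the vertical lines $u = \alpha$ and $u = \beta$. To remove these singularities I rescale the independent variable by $d\tau/dz = 1/D(u)$, giving the desingularised system
\begin{equation*}
\dot u = D(u)\,v, \qquad \dot v = -D'(u)v^2 - cv - R(u),
\end{equation*}
where the overdot denotes $d/d\tau$. Note that $\tau$ runs in the same direction as $z$ on $[0,\alpha)\cup(\beta,1]$ and in the opposite direction on $(\alpha,\beta)$, so the $\tau$-flow on the negative-diffusion strip reverses.

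I then locate and classify the equilibria of the desingularised system. In addition to $E_0=(0,0)$ and $E_1=(1,0)$ there are up to four further equilibria $E_\alpha^\pm = (\alpha, v_\alpha^\pm)$ and $E_\beta^\pm = (\beta, v_\beta^\pm)$, where the $v$-components are the roots of $D'(u^*)v^2 + cv + R(u^*)=0$ for $u^* \in \{\alpha,\beta\}$. The pair $E_\alpha^\pm$ is always real (with $v_\alpha^- < 0 < v_\alpha^+$), while $E_\beta^\pm$ is real precisely when $c^2 \ge 4D'(\beta)R(\beta)$, in which case both satisfy $v_\beta^- < v_\beta^+ < 0$. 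Since the Jacobian is lower triangular wherever $D(u^*) = 0$, its eigenvalues can be read off directly: $E_1$, $E_\alpha^\pm$ and $E_\beta^-$ are saddles, $E_\beta^+$ is a stable node, and $E_0$ is a stable node exactly when $c \ge c^* = 2\sqrt{\lambda D_i}$ (a stable spiral otherwise).

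A smooth, monotone, nonnegative travelling wave corresponds, in the $(u,v)$-plane, to the concatenation of three heteroclinic orbits of the desingularised flow: (i) the unstable manifold of $E_1$ arriving at $E_\beta^+$ through $\{\beta < u < 1,\,v<0\}$; (ii) one branch of the unstable manifold of $E_\alpha^-$ arriving at $E_\beta^+$ through $\{\alpha < u < \beta,\,v<0\}$ (traversed in the reverse $z$-sense because of the flip of $d\tau/dz$); and (iii) the other branch of the unstable manifold of $E_\alpha^-$ arriving at $E_0$ through $\{0 < u < \alpha,\,v<0\}$. Each connection I would obtain by a trapping-region argument: I build an invariant region in the lower half-plane $\{v<0\}$ bounded by pieces of the $u$-axis, the $\dot v = 0$ nullcline, and an auxiliary straight line on which $\dot v$ has a controlled sign, verify that the flow points inward on the boundary, and conclude that the one-dimensional unstable manifold entering the region must converge to the unique attractor inside. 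The condition $c \ge c^*$ enters precisely in piece (iii): it guarantees that $E_0$ is a node rather than a spiral, so the approaching orbit stays in $v \le 0$ and the corresponding $u(z)$ is monotone and nonnegative. As a preliminary lemma I need to check that $c \ge c^*$ forces $c^2 \ge 4 D'(\beta) R(\beta)$, so that $E_\beta^\pm$ actually exist throughout the wave-speed range claimed in (\ref{minimumwave}).

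The hardest step should be piece (ii), which traverses the negative-diffusion strip $(\alpha,\beta)$. There the desingularised time runs opposite to the physical time, and the single unstable branch of $E_\alpha^-$ leaving into $u > \alpha$ must avoid the rival saddle $E_\beta^-$ and the extraneous equilibrium $E_\alpha^+$ and land inside the basin of $E_\beta^+$. Constructing a trapping region whose vertices are effectively $E_\alpha^-$ and $E_\beta^+$ and whose boundary is flow-invariant uniformly for all $c \ge c^*$ is the main technical obstacle, and requires a careful comparison of where the $\dot v = 0$ nullcline sits relative to the straight line segment connecting the two equilibria.
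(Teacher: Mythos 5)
Your proposal is correct in outline and follows the same overall strategy as the paper --- desingularise, classify equilibria, build invariant regions in the lower half-plane, apply Poincar\'{e}--Bendixson, and glue three heteroclinic orbits --- but it is built on a genuinely different choice of phase-plane variable, and this changes the geometry in a meaningful way. The paper sets $p:=D(u)\,du/dz$ rather than $v:=du/dz$, so its desingularised system (\ref{ODEsystem_desingularised_1}) has $dp/d\xi=-cp-D(u)R(u)$ with no quadratic term in $p$: the nontrivial nullcline is the single graph $p=-D(u)R(u)/c$, each wall of singularities collapses to one ``hole'' at $p=0$, and there are only four equilibria $(0,0),(\alpha,0),(\beta,0),(1,0)$, all on the $u$-axis (Lemma~\ref{L:EQ}). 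In your $(u,v)$ coordinates the $-D'(u)v^2$ term survives, the walls carry the off-axis equilibria $E_\alpha^\pm$, $E_\beta^\pm$, and your piece (ii) must thread between the extra saddles $E_\alpha^+$ and $E_\beta^-$; you correctly identify this as the main technical obstacle, and it is precisely the work that the substitution $p=D(u)v$ buys you out of (in the paper the analogous connection $(\alpha,0)\to(\beta,0)$ follows from the same straight-line region $\mathcal{R}_2$ used for the outer piece). What your coordinates buy in exchange is transparency of the crossing: the wave meets $u=\alpha$ exactly at $E_\alpha^-$ with finite slope $v_\alpha^-$, so the $C^1$ matching of the concatenated pieces is visible, whereas in the paper the slope at the hole must be recovered from $p=0$ via l'H\^{o}pital-type reasoning. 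Your classification of the equilibria checks out, as does the role of the preliminary lemma $c^*>2\sqrt{D'(\beta)R(\beta)}$ (Lemma~\ref{L:THRES} in the paper, which is where $D_i>4D_g$ enters). Two small cautions: first, the condition $c\ge c^*$ does not only make $E_0$ a node --- it is also exactly the threshold for the straight-line trapping region near $u=0$ to be flow-invariant (the supremum in Lemma~\ref{L:S1} is attained at $u=0$, which is why the two thresholds coincide); your sketch should make clear that the auxiliary-line sign condition, not just the node condition, is satisfied for all $c\ge c^*$. Second, like the paper you must still justify that the concatenation of desingularised heteroclinic orbits genuinely yields a single smooth solution of (\ref{ODE_1}) through the singular points; the paper flags this in Remark~\ref{REM} and defers to canard theory, and your argument needs the same caveat.
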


This theorem agrees with the result of \citet{ferracuti2009travelling}, and because of the specific nonlinear diffusivity function, we can further extend their results. Moreover, instead of the comparison method used by \citet{ferracuti2009travelling}, we use a geometric approach to prove the existence of travelling wave solutions. This geometric approach has the advantage that it can also be used to study shock-fronted, discontinuous travelling wave solutions \citep{wechselberger2010folds,harley2014novel,harley2014existence}. While shock-fronted travelling wave solutions are not the focus in this manuscript, we show in the final section that they do exist for (\ref{R(u)2}) with different $D(U)$, see Figure \ref{phaseplane_pic4}a in Section~\ref{SS:shocks}. The lower bound $c^*$ in Theorem~\ref{THEOREM} is often called the \emph{minimum wave speed} as it represents the monotone nonnegative travelling wave solutions with the lowest wave speed \citep{murray2002mathematical}. Numerical simulations show that (\ref{RDE_1}) with (\ref{D(u)2}) and (\ref{R(u)2}) indeed support smooth travelling wave solutions even though the nonlinear diffusivity function goes negative. Moreover, the speed relates to the initial condition, and the wave speed converges to the minimum wave speed $c^*$ as the initial condition limits to the Heaviside initial condition, see Figure \ref{travellingwavesolutionpicture1}. We will also show the connection between the existence of smooth monotone nonnegative travelling wave solutions, the spectrum of the travelling wave solutions, and the minimum wave speed $c^*$.

This manuscript is organised as follows. We prove Theorem~\ref{THEOREM} in Section~\ref{S:EXIST} by using desingularisation techniques \citep{aronson1980density} and detailed phase plane analysis which have not been applied to (\ref{RDE_1}) before. In Section~\ref{S:ST}, we determine the spectral properties of the travelling wave solutions and show how the minimum wave speed $c^*$ is related to absolute instabilities \citep{sandstede2002stability,kapitula2013spectral,sherratt2014mathematical}. Some interesting results for different nonlinear diffusivity functions with the same kinetic term $(\ref{R(u)2})$ are discussed in Section~\ref{S:SUM}. Here, we also discuss the implications of the analytical results for the discrete model. Note that throughout the manuscript all theoretical results are supported by high-quality numerical simulations of the continuum PDE model.

\begin{remark}
Many essential mathematical questions related to, for instance, well-posedness, remain open for PDEs with forward-backward diffusion, i.e. models like \eqref{RDE_1} with nonlinear diffusivity functions that change sign. For instance, 
the well-studied Perona-Malik model \citep{perona1990scale} from image analysis with forward-backward diffusion, but without a kinetic term, is ill-posed \citep{weickert1998anisotropic}. See also \cite{hollig1983existence}.

The ill-posedness of these PDEs with forward-backward diffusion can often be addressed by adding a small regularisation term, like a viscous regularisation term \citep{novick1991stable} or a nonlocal Cahn-Hilliard-type regularisation term \citep{pego1989front}. For the Perona-Malik model this was done, with another type of regularisation term, by \cite{barenblatt1993degenerate}. Interestingly, different regularisations can have different singular limits, in particular, when shock solutions are formed (see also Section~\ref{SS:shocks}). This is particularly interesting when you realise that most numerical schemes introduce some artificial regularisation. In other words, different numerical schemes can correctly yield different solutions \citep{witelski1995shocks}. Also, recall that in the derivation of the continuum limit higher order terms were ignored. These higher order terms potentially have a regularising effect and can shed light on the ``right'' type of regularisation. 

Since we are constructing smooth solutions in this manuscript, we do not address the question of well-posedness of \eqref{RDE_1}.
\end{remark}

\section{Existence of travelling wave solutions} \label{S:EXIST}
\subsection{Transformation and Desingularisation}
We use a dynamical systems approach to analyse the second-order ODE (\ref{ODE_1}) whose solutions that asymptote to $\displaystyle{\lim_{z\to-\infty}}u=1$ and $\displaystyle{\lim_{z\to\infty}}u=0$ correspond to travelling wave solutions of \eqref{RDE_1}. Upon introducing $p:=D(u)du/dz$, \eqref{ODE_1} can be written as a singular system of first-order ODEs
\begin{equation}
\label{ODEsystem_desingularised_0}
    \left\{\begin{aligned}
     D(u)\frac{du}{dz}&=p,\\
     D(u)\frac{dp}{dz}&=-cp-D(u)R(u).
\end{aligned}\right.
\end{equation}
Travelling wave solutions of (\ref{RDE_1}) now correspond to heteroclinic orbits of (\ref{ODEsystem_desingularised_0}) connecting $(1,0)$ to $(0,0)$. 
Note that $p>0$ if $du/dz<0$ and $D(u)<0$. Thus, while we expect that the derivative of a travelling wave solution is always negative, $p$ is not necessarily always negative. The nullclines of system (\ref{ODEsystem_desingularised_0}) are given by $p=0$ and $-cp-D(u)R(u)=0$ with the constraint that $D(u)\ne0$. 
However, $D(u)$ vanishes when $u=\alpha$ and $u=\beta$~\eqref{X}, and
system (\ref{ODEsystem_desingularised_0}) is thus undefined, or singular, along the lines $u=\alpha$ and $u=\beta$ \citep{simpson2007nonmonotone}. These lines are
sometimes called the \emph{walls of singularities} \citep{pettet2000lotka,wechselberger2010folds,harley2014existence}.
Trajectories can potentially still cross through these walls at special points, sometimes referred to as \emph{holes in the wall} \citep{pettet2000lotka,wechselberger2010folds,harley2014existence}, when, in addition to $D(u)=0$, the right hand sides of the singular system also vanish (and if the holes in the wall are of the correct type \citep{wechselberger2005existence,wechselberger2010folds,harley2014existence}). 
These holes in the wall, and the trajectories crossing them, can often be linked to {\emph{folded singularities}} and {\emph{canard solutions}} upon embedding the singular system into higher-dimensional singularly perturbed systems with {\emph{folded critical manifolds}}, we refer to \citet{szmolyan2001canards,wechselberger2005existence,wechselberger2010folds,harley2014existence}, and references therein, for more details on this now well-established theory.
For system (\ref{ODEsystem_desingularised_0}) the holes in the wall are $(\alpha,0)$ and $(\beta,0)$.
%
%
%
%
To remove the singularities, we desingularise system (\ref{ODEsystem_desingularised_0}) by introducing a stretched variable $\xi$ satisfying $D(u)d\xi=dz$ \citep{aronson1980density,murray2002mathematical,sanchez1994existence,harley2014existence}. Subsequently, system
(\ref{ODEsystem_desingularised_0}) becomes 
\begin{equation}
\label{ODEsystem_desingularised_1}
    \left\{\begin{aligned}
    &\frac{du}{d\xi}=p,\\
    &\frac{dp}{d\xi}=-cp-D(u)R(u).
\end{aligned}\right.
\end{equation}
Here we see that the desingularisation changes the independent variable $z$ in a nonlinear fashion, but it does not change the dependent variables $(u,p)$. Consequently, the $(u,p)$ phase planes of (\ref{ODEsystem_desingularised_0}) and (\ref{ODEsystem_desingularised_1}) will have the same trajectories but the ``time'' it takes to evolve along such a trajectory is different. In particular,
when $D(u)>0$, $d\xi/dz>0$ and therefore trajectories on the phase planes of (\ref{ODEsystem_desingularised_0}) and (\ref{ODEsystem_desingularised_1}) have the same orientation. In contrast, when $D(u)<0$, $d\xi/dz<0$ and trajectories on the two phase planes are in the opposite direction, see Figure \ref{phaseplane_constrast1}. 
\begin{figure}
\centering
\begin{tikzpicture}[x=5cm,y=5cm, scale=1]
\fill[green!20] (0.5,0.2)--(0.83,0.2)--(0.83,-1)--(0.5,-1);
\draw[thick] (0,-1) -- (0,0.2)
node[above,thick] {$p$} coordinate( y axis);
\draw[very thick,blue] (0,0) -- (1,0) 
node[black,right] {$u$} coordinate( x axis);
\draw [color=blue,very thick] [label] plot file {phaseplane9_2.dat};
\draw[dashed,thick,color=red] (0.83,0.2) -- (0.83,-1);
\draw[dashed,thick,color=red] (0.5,0.2) -- (0.5,-1);
\draw[-latex,color=red] (0.4,-0.1) -- (0.2,-0.4);
\draw[-latex,color=red] (0.2,0.2) -- (0.3,0.1);
\draw[-latex,color=red] (0.45,-0.9) -- (0.4,-0.6);
\draw[-latex,color=red] (0.7,0.1)--(0.6,0.01) ;
\draw[-latex,color=red] (0.8,0.1)--(0.7,0.2) ;
\draw[-latex,color=red] (0.85,0.2) -- (0.95,0.1);
\draw[-latex,color=red] (0.95,0) -- (0.89,-0.05);
\draw[-latex,color=red] (0.55,-0.2)--(0.65,-0.7) ;
\draw[-latex,color=red] (0.95,-0.7) -- (0.85,-0.2);
\draw[shift={(0,0)}] (0.01,0) -- (-0.01,0) node[left] {$0$};
\draw[shift={(0,-1)}] (0.01,0) -- (-0.01,0) node[left] {$-0.02$};
\node [black,right] at (0.5,-0.04) {$\alpha$};
\node [black,left] at (0.83,-0.04) {$\beta$};
\node [black,below] at (0.5,-1) {(a)};
\end{tikzpicture}
\hfill
\begin{tikzpicture}[x=0.5cm,y=5cm, scale=1]
\draw[transparent] (0,-1) -- (0,0.2);
\draw[-latex,color=black] (0,-0.2)--(1,-0.2);
\node[black,left] at (0,-0.2) {$z$};
\node[black,right] at (1,-0.2) {$\xi$};
\end{tikzpicture}
\hfill
\begin{tikzpicture}[x=5cm,y=5cm, scale=1]
\fill[green!20] (0.5,0.2)--(0.83,0.2)--(0.83,-1)--(0.5,-1);
\draw[thick] (0,-1) -- (0,0.2)
node[above,very thick] {$p$} coordinate( y axis);
\draw[very thick,blue] (0,0) -- (1,0) 
node[black,right] {$u$} coordinate( x axis);
\draw [color=blue,very thick] [label] plot file {phaseplane9_2.dat};
\draw[dashed,thick,color=red] (0.83,0.2) -- (0.83,-1);
\draw[dashed,thick,color=red] (0.5,0.2) -- (0.5,-1);
\draw[-latex,color=red] (0.4,-0.1) -- (0.2,-0.4);
\draw[-latex,color=red] (0.2,0.2) -- (0.3,0.1);
\draw[-latex,color=red] (0.45,-0.9) -- (0.4,-0.6);
\draw[-latex,color=red] (0.6,0.01) -- (0.7,0.1);
\draw[-latex,color=red] (0.7,0.2) -- (0.8,0.1);
\draw[-latex,color=red] (0.85,0.2) -- (0.95,0.1);
\draw[-latex,color=red] (0.95,0) -- (0.89,-0.05);
\draw[-latex,color=red] (0.65,-0.7) -- (0.55,-0.2);
\draw[-latex,color=red] (0.95,-0.7) -- (0.85,-0.2);
\draw[shift={(0,0)}] (0.01,0) -- (-0.01,0) node[left] {$0$};
\draw[shift={(0,-1)}] (0.01,0) -- (-0.01,0) node[left] {$-0.02$};
\node [black,right] at (0.5,-0.04) {$\alpha$};
\node [black,left] at (0.83,-0.04) {$\beta$};
\node [black,below] at (0.5,-1) {(b)};
\end{tikzpicture}
\caption{(a) is the phase plane of system (\ref{ODEsystem_desingularised_0}) with parameters $D_i=0.25$, $D_g=0.05$, $\lambda=0.75$ and $c=0.866$. The vertical dashed lines are the walls of singularities $u=\alpha$ and $u=\beta$ and the solid blue lines are nullclines. Red arrows show the orientation of the trajectories. (b) is the phase plane of system (\ref{ODEsystem_desingularised_1}) for the same parameter values and red lines are nullclines. For $u$ in between $\alpha$ and $\beta$, the orientation of the trajectories is opposite compared to (a), while the orientation is the same for $u<\alpha$ and $u>\beta$.}
\label{phaseplane_constrast1}
\end{figure}
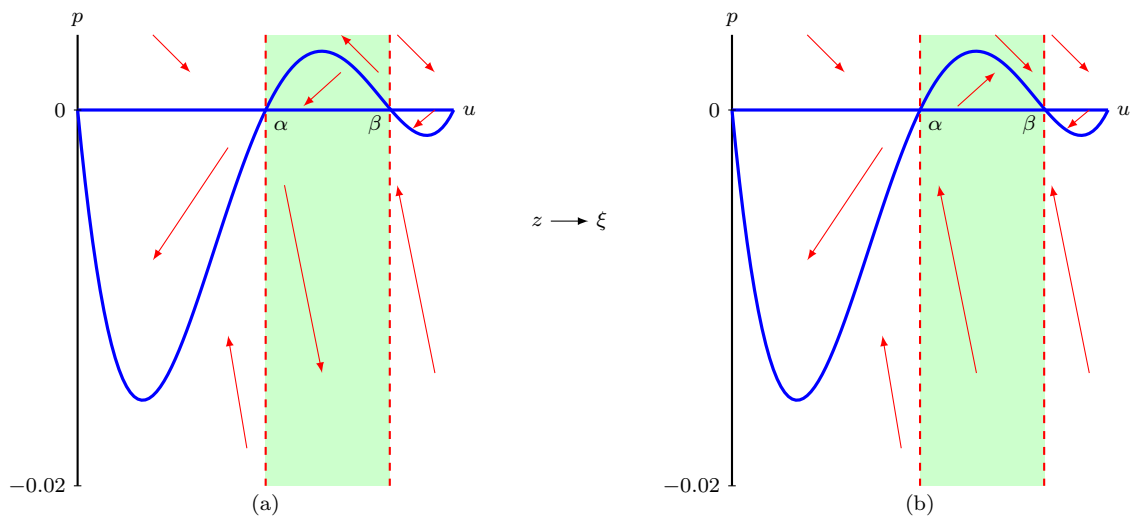 
Therefore, heteroclinic orbits of (\ref{ODEsystem_desingularised_0}) connecting $(1,0)$ to $(0,0)$ crossing the holes in the walls $(\alpha,0)$ and $(\beta,0)$, if they exist, are transformed and separated as heteroclinic orbits connecting $(1,0)$ to $(\beta,0)$, $(\alpha,0)$ to $(\beta,0)$ and $(\alpha,0)$ to $(0,0)$ of (\ref{ODEsystem_desingularised_1}) and \emph{vice versa}. Next, we will prove the existence of these heteroclinic orbits in system (\ref{ODEsystem_desingularised_1}) for a range of wave speeds $c$, and then combine these heteroclinic orbits in system (\ref{ODEsystem_desingularised_1}) as one global heteroclinic orbit in system (\ref{ODEsystem_desingularised_0}).

\subsection{Phase plane analysis of the desingularised system}
We first study the desingularised system (\ref{ODEsystem_desingularised_1}). It has nullclines $p=0$ and 
\begin{equation}
\label{nullcline1}
    p=-\frac{D(u)R(u)}{c}.
\end{equation}
The intersections of the two nullclines give four equilibrium points: $(0,0),(1,0),(\alpha,0),(\beta,0)$.
\begin{lemma} \label{L:EQ}
The equilibrium points $(1,0)$ and $(\alpha,0)$ are saddles. The equilibrium point $(0,0)$ is a stable node if
\begin{equation}
\label{condition2}
    c\ge2\sqrt{D(0)R'(0)}=2\sqrt{\lambda D_i}=c^*,
\end{equation}
and a stable spiral otherwise. The equilibrium point $(\beta,0)$ is a stable node if 
\begin{equation}
\label{condition1}
    c\ge2\sqrt{D'(\beta)R(\beta)},
\end{equation}
and a stable spiral otherwise.
\end{lemma}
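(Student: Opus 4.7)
The plan is to linearise the desingularised system \eqref{ODEsystem_desingularised_1} at each equilibrium and read off the qualitative behaviour from the trace and determinant of the Jacobian. The Jacobian at a generic point $(u,p)$ is
\begin{equation*}
J(u,p)=\begin{pmatrix} 0 & 1 \\ -D'(u)R(u)-D(u)R'(u) & -c \end{pmatrix},
\end{equation*}
so at any equilibrium $(u^*,0)$ the characteristic polynomial is
\begin{equation*}
\mu^2+c\mu+\bigl[D'(u^*)R(u^*)+D(u^*)R'(u^*)\bigr]=0.
\end{equation*}
The trace is $-c<0$, so all four equilibria will be either saddles (determinant negative) or stable nodes/spirals (determinant positive), with the node/spiral split determined by whether the discriminant $c^2-4\det J$ is nonnegative.

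First I would handle the two equilibria where $R$ vanishes. At $(0,0)$ we have $R(0)=0$, $R'(0)=\lambda>0$, and $D(0)=D_i>0$, so $\det J=\lambda D_i>0$ and the discriminant condition $c^2\ge 4\lambda D_i$ is exactly \eqref{condition2}. Similarly at $(1,0)$ we have $R(1)=0$ and a short computation from \eqref{D(u)2} gives $D(1)=D_g$, $R'(1)=-\lambda$, hence $\det J=-\lambda D_g<0$ and $(1,0)$ is a saddle.

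Next I would handle the two equilibria on the walls of singularities, where $D$ vanishes. At $(\alpha,0)$ and $(\beta,0)$ the determinant reduces to $D'(u^*)R(u^*)$. Since $R(u)=\lambda u(1-u)>0$ for $u\in(0,1)$, the sign is governed by $D'$. The sign condition \eqref{signcondition1} together with the convexity of $D$ forces $D$ to cross zero transversally at $\alpha$ (from positive to negative) and at $\beta$ (from negative to positive), so $D'(\alpha)<0$ and $D'(\beta)>0$. Thus $\det J<0$ at $(\alpha,0)$, giving a saddle, while $\det J>0$ at $(\beta,0)$, giving a stable equilibrium whose node/spiral character is determined by $c^2\ge 4D'(\beta)R(\beta)$, which is \eqref{condition1}.

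I do not anticipate a genuine obstacle: the proof is essentially a Hartman--Grobman linearisation exercise, with the only subtlety being to verify the signs of $D'(\alpha)$ and $D'(\beta)$ from the structure of \eqref{D(u)2}, \eqref{signcondition1} and \eqref{X}. This can be done either by directly differentiating the quadratic $D(u)$ and evaluating at the explicit roots in \eqref{X}, or, more cleanly, by invoking that $D$ is a convex quadratic with two simple roots in $(0,1)$ so its derivative must be negative at the smaller root and positive at the larger root.
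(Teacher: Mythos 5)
Your proposal is correct and follows essentially the same route as the paper: both linearise the desingularised system at the four equilibria and classify them from the sign of $F(u^*)=D'(u^*)R(u^*)+D(u^*)R'(u^*)$ (which is your $\det J$) together with the discriminant $c^2-4F(u^*)$, the paper simply writing out the eigenvalues explicitly rather than phrasing it in trace--determinant language. Your observation that $D'(\alpha)<0$ and $D'(\beta)>0$ follow from convexity and the sign condition matches the facts the paper uses.
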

\begin{proof}
The Jacobian of system (\ref{ODEsystem_desingularised_1}) is
\begin{equation}
\label{XJ}
   J(u,p)=\left(\begin{matrix}
   0 & 1 \\
   -F(u) & -c
  \end{matrix}\right),\quad\text{where}\quad F(u):=\frac{d}{du}\left(D(u)R(u)\right)=D'(u)R(u)+D(u)R'(u),
\end{equation}
with $D(u)R(u)$ the pointwise product of $D(u)$ and $R(u)$ and where we, as usual, omit the dot. The Jacobian has eigenvalues and eigenvectors
\begin{equation}
\nonumber
    \lambda_{\pm}=\frac{-c\pm\sqrt{c^2-4F(u)}}{2},\quad E_{\pm}=(1,\lambda{\pm}).
\end{equation}
For the equilibrium point $(1,0)$ this reduces to
\begin{equation}
    \label{eigenvector1}
    \lambda_{1\pm}=\frac{-c\pm\sqrt{c^2-4D(1)R'(1)}}{2},\quad E_{1\pm}=(1,\lambda_{1\pm}).
\end{equation}
The eigenvalues $\lambda_{1\pm}$ are real and of opposite sign since $D(1)=D_g>0$ and $R'(1)=-\lambda<0$. Thus $(1,0)$ is a saddle.

Similarly, the Jacobian of the equilibrium point $(\alpha,0)$ has eigenvalues and eigenvectors
\begin{equation}
    \label{eigenvectoralpha}
    \lambda_{\alpha\pm}=\frac{-c\pm\sqrt{c^2-4D'(\alpha)R(\alpha)}}{2},\quad E_{\alpha\pm}=(1,\lambda_{\alpha\pm}).
\end{equation}
Knowing that $D'(\alpha)<0$ and $R(\alpha)>0$, $\lambda_{\alpha+}$ is real and positive and $\lambda_{\alpha-}$ is real and negative. Thus $(\alpha,0)$ is a saddle.

The Jacobian of the equilibrium point $(0,0)$ has eigenvalues and eigenvectors
\begin{equation}
    \label{eigenvector0}
    \lambda_{0\pm}=\frac{-c\pm\sqrt{c^2-4D(0)R'(0)}}{2},\quad E_{0\pm}=(1,\lambda_{0\pm}).
\end{equation}
The eigenvalues $\lambda_{0\pm}$ are real and negative if (\ref{condition2}) holds since $D(0)=D_i>0$ and $R'(0)=\lambda>0$. Thus the equilibrium point $(0,0)$ is a stable node if (\ref{condition2}) holds. Otherwise, $\lambda_{0\pm}$ are complex-valued with negative real parts and $(1,0)$ is a stable spiral.

Similarly, the Jacobian of equilibrium point $(\beta,0)$ has eigenvalues and eigenvectors
\begin{equation}
    \label{eigenvectorbeta}
    \lambda_{\beta\pm}=\frac{-c\pm\sqrt{c^2-4D'(\beta)R(\beta)}}{2},\quad E_{\beta\pm}=(1,\lambda_{\beta\pm}).
\end{equation}
The eigenvalues $\lambda_{\beta\pm}$ are real and negative if (\ref{condition1}) holds since $D'(\beta)>0$ and $R(\beta)>0$. Thus the equilibrium point $(\beta,0)$ is a stable node if (\ref{condition1}) holds. Otherwise, $\lambda_{\beta\pm}$ are complex-valued with negative real parts and $(\beta,0)$ is a stable spiral.$\hfill\square$
\end{proof}
\begin{lemma} \label{L:THRES}
For $D_i>4D_g$, the thresholds of conditions (\ref{condition2}) and (\ref{condition1}) are ordered as
\begin{equation}
    c^* > 
    2\sqrt{D'(\beta)R(\beta)}.
    \label{relation1}
\end{equation}
\end{lemma}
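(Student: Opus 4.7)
The plan is to reduce \eqref{relation1} to an explicit algebraic inequality by evaluating $D'(\beta)R(\beta)$ in closed form and then bounding each factor. Squaring both sides, (\ref{relation1}) is equivalent to $\lambda D_i>D'(\beta)R(\beta)$, so it suffices to prove this. Since $R(U)=\lambda U(1-U)$, one has $R(\beta)=\lambda\beta(1-\beta)$, and after cancelling a factor of $\lambda$ the claim becomes
\begin{equation*}
D_i>\beta(1-\beta)\,D'(\beta).
\end{equation*}

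First, I compute $D'(\beta)$ using the explicit form of $\beta$ given in \eqref{X}. Differentiating \eqref{D(u)2} gives $D'(U)=(6U-4)(D_i-D_g)$, and substituting $\beta=\tfrac{2}{3}+\tfrac{\sqrt{D_i^2+4D_g^2-5D_iD_g}}{3(D_i-D_g)}$ collapses the $2/3$ contribution and leaves
\begin{equation*}
D'(\beta)=2\sqrt{D_i^2+4D_g^2-5D_iD_g}=2\sqrt{(D_i-D_g)(D_i-4D_g)},
\end{equation*}
where the last factorisation makes manifest that the radicand is positive precisely under the standing assumption $D_i>4D_g$ (and the fact that $D_i>D_g$).

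Second, I bound the two factors on the right-hand side separately. Because $\beta\in(2/3,1)$ by \eqref{X}, the parabola $t\mapsto t(1-t)$ is decreasing on this interval, so $\beta(1-\beta)<\tfrac{2}{3}\cdot\tfrac{1}{3}=\tfrac{2}{9}$. For the square root, $D_g\ge 0$ implies both $D_i-D_g\le D_i$ and $D_i-4D_g\le D_i$, hence $\sqrt{(D_i-D_g)(D_i-4D_g)}\le D_i$. Multiplying these bounds yields
\begin{equation*}
\beta(1-\beta)\,D'(\beta)<\tfrac{2}{9}\cdot 2D_i=\tfrac{4}{9}D_i<D_i,
\end{equation*}
which is the desired inequality; back-substituting restores \eqref{relation1}.

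I do not anticipate a genuine obstacle: the only place requiring care is to verify that the identity $D_i^2+4D_g^2-5D_iD_g=(D_i-D_g)(D_i-4D_g)$ is used consistently and that the edge case $D_g=0$ (allowed since $D_g\ge 0$) is handled — in that limit $\beta\to 1$ so $\beta(1-\beta)\to 0$ while $D'(\beta)\to 2D_i$ stays bounded, and the inequality degenerates to $0<D_i$, still strict. The proof is essentially a one-shot algebraic bound once $D'(\beta)$ is in hand.
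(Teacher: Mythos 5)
Your proof is correct, and it follows essentially the same route as the paper: square both sides, evaluate $R(\beta)$ and $D'(\beta)$ explicitly, and bound the result using $\beta\in(2/3,1)$ together with $D_g\ge 0$. The paper organises the final bound slightly differently (writing $D'(\beta)=3(D_i-D_g)(\beta-\alpha)$ and reducing to $D_i/(D_i-D_g)>3\beta(1-\beta)(\beta-\alpha)$), but this is equivalent to your explicit radical computation.
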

\begin{proof}
The right hand side of (\ref{relation1}) is given by
\begin{equation}
\nonumber
    2\sqrt{D'(\beta)R(\beta)}=2\sqrt{3\lambda(D_i-D_g)\beta(1-\beta)(\beta-\alpha)}.
\end{equation}
Since $c^*=2\sqrt{\lambda D_i}$, proving relation (\ref{relation1}) is equivalent to proving
\begin{equation}
\nonumber
    D_i > 
    3(D_i-D_g)\beta(1-\beta)(\beta-\alpha),
\end{equation}
which is equivalent to proving
\begin{equation}
\label{relation21}
    \frac{D_i}{D_i-D_g}> 
    3\beta(1-\beta)(\beta-\alpha).
\end{equation}
Knowing that $2/3<\beta<1$ and $0<\beta-\alpha<2/3$ gives $3\beta(1-\beta)(\beta-\alpha)<2/3$.
Since $D_i>4D_g$, we have that ${D_i}/{(D_i-D_g)}>1$ since $D_i>D_i-D_g$. Hence, (\ref{relation21}) holds and thus (\ref{relation1}) holds.
$\hfill\square$
\end{proof}

For $c<c^*$, $(0,0)$ becomes a spiral node and hence we expect trajectories approaching $(0,0)$ to become negative which in the end would lead to travelling wave solutions become negative. Therefore, we now assume that $c\ge c^*$. To prove the existence of heteroclinic orbits between the equilibrium points, we construct invariant regions in the phase plane from which trajectories cannot leave, so that the Poincar\'{e}-Bendixson theorem can be applied \citep{jordan1999nonlinear}, see Figure \ref{phaseplane_1}. The slope of nullcline $(\ref{nullcline1})$ is $\chi(u)=-F(u)/c$, where $F(u)$ is given by (\ref{XJ}), while the slope of the unstable eigenvector of $(1,0)$ is $\lambda_{1+}$, see (\ref{eigenvector1}). We thus have
\begin{equation}
\label{relation22}
  \begin{aligned}
    \lambda_{1+}-\chi(1)=&\frac{-c+\sqrt{c^2-4D(1)R'(1)}}{2}+\frac{1}{c}D(1)R'(1)\\
    =&\frac{c\sqrt{c^2-4D(1)R'(1)}-\left(c^2-2D(1)R'(1)\right)}{2c}\\
    =&\frac{\sqrt{c^4-4c^2D(1)R'(1)}-\sqrt{c^4-4c^2D(1)R'(1)+4\left(D(1)R'(1)\right)^2}}{2c}<0.
  \end{aligned}
\end{equation}
That is, the unstable eigenvector of $(1,0)$ has a smaller slope than nullcline (\ref{nullcline1}) at $(1,0)$. In other words, the trajectory leaving $(1,0)$ with decreasing $u$ initially lies above the nullcline (\ref{nullcline1}).

Similarly, the slope of the unstable eigenvector of $(\alpha,0)$ is $\lambda_{\alpha+}$, see (\ref{eigenvectoralpha}). We have, after similar computation as (\ref{relation22}), $\lambda_{\alpha+}-\chi(\alpha)<0$.
Thus, the unstable eigenvector of $(\alpha,0)$ has a smaller slope than nullcline (\ref{nullcline1}) at $(\alpha,0)$. Therefore, the trajectory leaving $(\alpha,0)$ with decreasing $u$ initially lies above the nullcline (\ref{nullcline1}), while the trajectory leaving $(\alpha,0)$ with increasing $u$ initially lies below the nullcline (\ref{nullcline1}).

Under condition (\ref{condition2}), the least negative slope of the stable eigenvectors of equilibrium point $(0,0)$ is $\lambda_{0+}$, see (\ref{eigenvector0}). This gives, after a similar computation as $(\ref{relation22})$, $\lambda_{0+}-\chi(0)<0$.
Thus, both eigenvectors of $(0,0)$ have slopes that are more negative than nullcline (\ref{nullcline1}) at $(0,0)$. In other words, the eigenvectors of $(0,0)$ initially lie under the nullcline (\ref{nullcline1}) for $u>0$.

Similarly, under condition (\ref{condition1}), the least negative slope of the stable eigenvectors of $(\beta,0)$ is $\lambda_{\beta+}$, see (\ref{eigenvectorbeta}). This gives $\lambda_{\beta+}-\chi(\beta)<0$.
Thus, both eigenvectors have slopes that are more negative than nullcline (\ref{nullcline1}) at $(\beta,0)$. Therefore, the trajectory moving in $(\beta,0)$ with decreasing $u$ initially lies under the nullcline (\ref{nullcline1}) for $u>\beta$, while they lie above the nullcline (\ref{nullcline1}) for $u<\beta$, see also Figure \ref{phaseplane_1}.

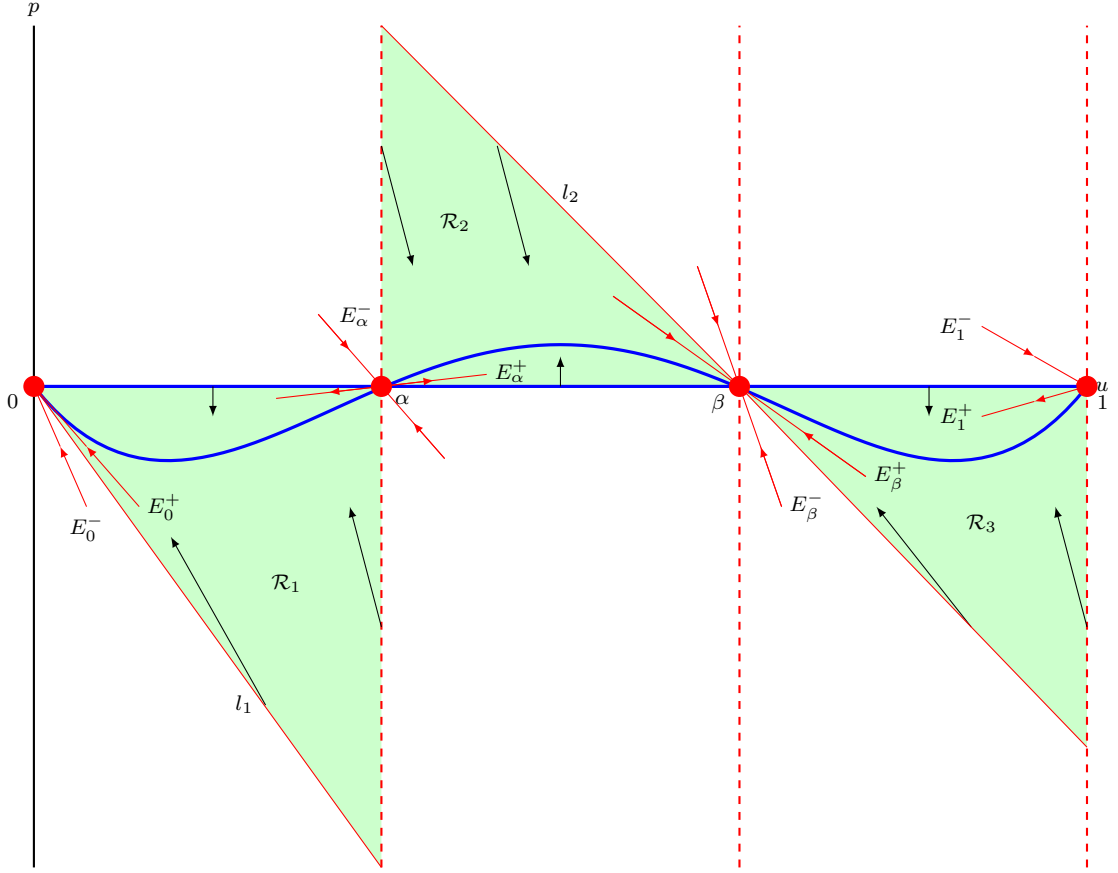
\begin{figure}
\centering
\begin{tikzpicture}[x=7cm,y=0.4cm, scale=2]
\fill[green!20] (0,0)--(0.33,0)--(0.33,-8);
\fill[green!20] (0.33,0)--(0.67,0)--(0.33,6);
\fill[green!20] (0.67,0)--(1,0)--(1,-6);
\draw [color=blue,very thick] [label] plot file {phaseplane8_1.dat};
\draw[thick] (0,-8) -- (0,6) node[above,very thick] {$p$} coordinate(y axis);
\draw[very thick,blue] (0,0) -- (1,0) 
node[black,right, thick] {$u$} coordinate( x axis);
\node [black,below] at (1.015,-0.01) {$1$};
\node [black,below] at (-0.02,0) {$0$};
\node [black,below] at (0.35,0) {$\alpha$};
\node [black,below] at (0.65,0) {$\beta$};
\node [black,below] at (0.24,-3) {$\mathcal{R}_1$};
\node [black,below] at (0.4,3) {$\mathcal{R}_2$};
\node [black,below] at (0.9,-2) {$\mathcal{R}_3$};
\node [black,below] at (0.2,-5) {$l_1$};
\node [black,below] at (0.51,3.5) {$l_2$};
\draw[dashed,color=red,thick] (0.33,-8) -- (0.33,6);
\draw[dashed,color=red,thick] (0.67,-8) -- (0.67,6);
\draw[dashed,color=red,thick] (1,-8) -- (1,6);
\draw[-latex] (0.22,-5.3) -- (0.13,-2.5);
\draw[-latex] (0.33,-4) -- (0.3,-2);
\draw[-latex] (0.17,0) -- (0.17,-0.5);
\draw[-latex] (0.5,0) -- (0.5,0.5);
\draw[-latex] (0.85,0) -- (0.85,-0.5);
\draw[-latex] (0.33,4) -- (0.36,2);
\draw[-latex] (0.44,4) -- (0.47,2);
\draw[-latex] (0.89,-4) -- (0.8,-2);
\draw[-latex] (1,-4) -- (0.97,-2);
\draw[color=red,-latex] (1,0) -- (0.95,-0.25);
\draw[color=red] (0.95,-0.25) -- (0.9,-0.5);
\draw[-latex,red] (0.9,1) -- (0.95,0.5);
\draw[-latex,red] (0.95,0.5) -- (1,0);

\draw[color=red] (0.43,0.2) -- (0.23,-0.2);
\draw[color=red,-latex] (0.33,0) -- (0.38,0.1);
\draw[color=red,-latex] (0.33,0) -- (0.28,-0.1);

\draw[color=red] (0.39,-1.2) -- (0.27,1.2);
\draw[color=red,-latex] (0.39,-1.2) -- (0.36,-0.6);
\draw[color=red,-latex] (0.27,1.2) -- (0.3,0.6);

\draw[color=red] (0,0) -- (0.33,-8);
\draw[color=red] (0.67,0) -- (1,-6);
\draw[color=red] (0.33,6) -- (0.67,0);
\draw[red] (0,0) -- (0.05,-1);
\draw[-latex,red] (0.1,-2) -- (0.05,-1);
\draw[red] (0,0) -- (0.025,-1);
\draw[-latex,red] (0.05,-2) -- (0.025,-1);
\draw[color=red] (0.55,1.5) -- (0.79,-1.5);
\draw[color=red,-latex] (0.55,1.5) -- (0.61,0.75);
\draw[color=red,-latex] (0.79,-1.5) -- (0.73,-0.75);
\draw[color=red] (0.63,2) -- (0.71,-2);
\draw[color=red,-latex] (0.63,2) -- (0.65,1);
\draw[color=red,-latex] (0.71,-2) -- (0.69,-1);
\node [black,right] at (0.1,-2) {$E_0^{+}$};
\node [black,below] at (0.05,-2) {$E_0^{-}$};
\node [black,right] at (0.43,0.26) {$E_\alpha^{+}$};
\node [black,left] at (0.33,1.2) {$E_\alpha^{-}$};
\node [black,right] at (0.79,-1.5) {$E_\beta^{+}$};
\node [black,right] at (0.71,-2) {$E_\beta^{-}$};
\node [black,left] at (0.9,1) {$E_1^{-}$};
\node [black,left] at (0.9,-0.5) {$E_1^{+}$};
\fill[red] (0.33,0) circle (2pt);
\fill[red] (0.67,0) circle (2pt);
\fill[red] (1,0) circle (2pt);
\fill[red] (0,0) circle (2pt);
\end{tikzpicture}
\caption{A qualitative phase plane of system (\ref{ODEsystem_desingularised_1}). The three dashed lines are $u=\alpha$, $u=\beta$ and $u=1$. The blue lines are the nullclines $p=0$ and $p=-D(u)R(u)/c$. Region $\mathcal{R}_1$ is bounded by $p=0$, $u=\alpha$ and a straight line $l_1$ with negative slope passing through $(0,0)$. Region $\mathcal{R}_2$ is bounded by $p=0$, $u=\alpha$ and a straight line $l_2$ with negative slope passing through $(\beta,0)$. Region $\mathcal{R}_3$ is bounded by $p=0$, $u=1$ and $l_2$.} 
\label{phaseplane_1}
\end{figure}

Next, we consider the region $\mathcal{R}_1$ bounded by $p=0$, $u=\alpha$ and a straight line $l_1$ through $(0,0)$ with a negative slope $\mu_1$. We aim to prove that for $c\ge c^*$, there always exists a slope $\mu_1$ so that no trajectories in region $\mathcal{R}_1$ can cross through its boundaries. Trajectories starting on $p=0$ have negative vertical directions since $du/d\xi=p=0$ and $dp/d\xi=-D(u)R(u)<0$ for $u\in(0,\alpha)$. Thus, trajectories in $\mathcal{R}_1$ cannot cross through $p=0$. Trajectories starting on $u=\alpha$ with negative $p$ values point into region $\mathcal{R}_1$ since $du/d\xi=p<0$ and $dp/d\xi=-cp>0$. Trajectories starting on $l_1$ satisfy $p=\mu_1 u$, and they point into $\mathcal{R}_1$ only if
\begin{equation}
    \nonumber
    \frac{dp}{du}\Bigr\rvert_{p=\mu_1u}=-c-\frac{D(u)R(u)}{\mu_1u}\le\mu_1,\quad \text{for}\quad u\in(0,\alpha).
\end{equation}
After rearranging and recalling that $\mu_1<0$, we obtain
\begin{equation}
\label{condition_wow1}
    \mu_1(\mu_1+c)\le-\frac{D(u)R(u)}{u} = -\lambda D(u)(1-u),\quad \text{for}\quad u\in(0,\alpha).
\end{equation}
\begin{lemma} \label{L:S1}
For $c\ge c^*$, there exists a $\mu_1$ such that inequality (\ref{condition_wow1}) is valid for any $u\in(0,\alpha)$.
\end{lemma}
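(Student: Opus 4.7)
The plan is to reduce the pointwise inequality (\ref{condition_wow1}) to a single uniform inequality in $\mu_1$ by bounding the right-hand side from below over $u\in(0,\alpha)$, and then to solve the resulting quadratic in $\mu_1$, which turns out to be solvable exactly under the hypothesis $c\ge c^*$.

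First I would show that $D(u)(1-u)\le D_i$ on $[0,\alpha]$, with equality only at $u=0$. The diffusivity (\ref{D(u)2}) is a convex quadratic with $D'(u)=(D_i-D_g)(6u-4)$, so its unique minimum on $[0,1]$ lies at $u=2/3$; since $\alpha<2/3$ by (\ref{X}), $D$ is monotonically decreasing on $[0,\alpha]$ and $D(u)\le D(0)=D_i$. Combined with the trivial bound $1-u\le 1$, this gives $D(u)(1-u)\le D_i$ on $[0,\alpha]$, hence $-\lambda D(u)(1-u)\ge -\lambda D_i$ for $u\in(0,\alpha)$. It therefore suffices to exhibit a single $\mu_1<0$ with $\mu_1(\mu_1+c)\le -\lambda D_i$, or equivalently
\[
\mu_1^2+c\mu_1+\lambda D_i\le 0.
\]

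The discriminant of this quadratic in $\mu_1$ is $c^2-4\lambda D_i$, which is nonnegative precisely when $c\ge c^*=2\sqrt{\lambda D_i}$, exactly our hypothesis. Both real roots are negative, since their sum $-c$ is negative and their product $\lambda D_i$ is positive; so any $\mu_1$ in the closed interval they bound satisfies the inequality. A clean explicit choice is $\mu_1=-c/2$, giving $\mu_1(\mu_1+c)=-c^2/4\le -\lambda D_i$; alternatively one may pick $\mu_1=\lambda_{0+}$ from (\ref{eigenvector0}), which carries the geometric meaning that $l_1$ is tangent at $(0,0)$ to the slower stable eigendirection of the linearisation there. The only slightly delicate point is the step replacing the $u$-dependent bound by the uniform constant $\lambda D_i$, and this rests on the monotonicity of $D$ on $[0,\alpha]$, which itself follows cleanly from $\alpha<2/3$ guaranteed by $D_i>4D_g$; I do not anticipate any serious obstacle.
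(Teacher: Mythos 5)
Your proof is correct and takes essentially the same route as the paper: both reduce the pointwise inequality to the uniform bound $\sup_{u\in(0,\alpha)}D(u)(1-u)=D(0)=D_i$ using the monotonicity of $D$ and $1-u$ on $(0,\alpha)$, and both then choose $\mu_1=-c/2$, for which $\mu_1(\mu_1+c)=-c^2/4\le-\lambda D_i$ holds precisely when $c\ge c^*$. Your added justification that $D$ is decreasing on $[0,\alpha]$ (convexity with vertex at $u=2/3>\alpha$) is a detail the paper leaves implicit.
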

\begin{proof} 
Proving inequality (\ref{condition_wow1}) is equivalent to proving 
\begin{equation}
\label{condition_wow1_1}
    \mu_1(\mu_1+c)\le- \lambda \sup_{u\in(0,\alpha)} D(u)(1-u). 
\end{equation}
The left hand side of inequality (\ref{condition_wow1_1}) is minimal when $\mu_1=-c/2$. Setting $\mu_1=-c/2$ and substituting into inequality (\ref{condition_wow1_1}) gives a lower bound
\begin{equation}
\label{condition_wow1_2}
    c_1=2 \sqrt{\lambda} \sup_{u\in(0,\alpha]}\sqrt{D(u)(1-u)},
\end{equation}
such that $(\ref{condition_wow1_1})$ holds for $c\ge c_1$. The right hand side of (\ref{condition_wow1_2}) gives
\begin{equation}
\nonumber
    2 \sqrt{\lambda} \sup_{u\in(0,\alpha)}\sqrt{D(u)(1-u)}=2\sqrt{\lambda D(0)}=2\sqrt{\lambda D_i},
\end{equation}
since $D(u)$ and $(1-u)$ are both decreasing functions on $u\in(0,\alpha)$. Thus, $c_1=c^*$. Hence, for $c\ge c^*$, inequality (\ref{condition_wow1_1}) is valid for $\mu_1=-c/2$.$\hfill\square$
\end{proof}

Knowing that for $c\ge c^*$ inequality (\ref{condition_wow1}) is valid, trajectories on $l_1$ with $\mu_1=-c/2$ point into region $\mathcal{R}_1$. Thus, based on the Poincar\'{e}-Bendixson theorem \citep{jordan1999nonlinear}, the observation that the derivative of $u$ is negative in the region $\mathcal{R}_1$ (preventing the existence of a homoclinic orbit) and the absence of fixed points in the interior of $\mathcal{R}_1$ (preventing the existence of a limit cycle), the trajectory leaving from the equilibrium point $(\alpha,0)$ with decreasing $u$ and decreasing $p$ must connect with the equilibrium point $(0,0)$ without going negative in $u$. 

Similarly, we consider the region $\mathcal{R}_2$ bounded by $p=0$, $u=\alpha$ and a straight line $l_2$ through $(\beta,0)$ with a negative slope $\mu_2$, and the region $\mathcal{R}_3$ bounded by $p=0$, $u=1$ and $l_2$. Trajectories starting on $p=0$ have positive vertical directions for $u\in(\alpha, \beta)$ since $du/d\xi=p=0$ and $dp/d\xi=-D(u)R(u)>0$  and they have negative vertical directions since for $u\in(\beta,1)$, $du/d\xi=0$ and $dp/d\xi=-D(u)R(u)<0$. Trajectories starting on $u=\alpha$ with positive $p$ point into region $\mathcal{R}_2$ since $du/d\xi=p>0$ and $dp/d\xi=-cp<0$. Similarly, trajectories starting on $u=1$ with negative $p$ point into region $\mathcal{R}_3$. In addition, requiring the existence of a slope $\mu_2$ such that trajectories starting on $l_2$ point into regions $\mathcal{R}_2$ and $\mathcal{R}_3$ leads to the condition
\begin{equation}
\label{condition_wow2}
    \mu_2(\mu_2+c)\le-\frac{D(u)R(u)}{u-\beta}=-3(D_i-D_g)(u-\alpha)R(u), \quad \text{for} \quad u\in(\alpha,1). 
\end{equation}
\begin{lemma} \label{L:S2}
For $c\ge c^*$, there exists a $\mu_2$ such that inequality (\ref{condition_wow2}) is valid for any $u\in(\alpha,1)$.
\end{lemma}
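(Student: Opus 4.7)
The plan is to mirror the argument of Lemma~\ref{L:S1} almost verbatim, exploiting the fact that the left hand side of (\ref{condition_wow2}) is a concave-down quadratic in $\mu_2$. Writing $h(\mu_2):=\mu_2(\mu_2+c)$, I would observe that $h$ attains its minimum $-c^2/4$ at $\mu_2=-c/2$, so it suffices to find a threshold $c_2$ such that for $c\ge c_2$ the choice $\mu_2=-c/2$ satisfies (\ref{condition_wow2}). Substituting $\mu_2=-c/2$ and rearranging reduces the problem to exhibiting
\begin{equation*}
\frac{c^2}{4}\;\ge\;3(D_i-D_g)\,\sup_{u\in(\alpha,1)}(u-\alpha)\,R(u)
\;=\;3\lambda(D_i-D_g)\,\sup_{u\in(\alpha,1)}(u-\alpha)\,u\,(1-u),
\end{equation*}
so the natural candidate threshold is
\begin{equation*}
c_2:=2\sqrt{3\lambda(D_i-D_g)\,\sup_{u\in(\alpha,1)}(u-\alpha)\,u\,(1-u)}.
\end{equation*}

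The remaining task is to compare $c_2$ to $c^*=2\sqrt{\lambda D_i}$: showing $c_2\le c^*$ is equivalent to establishing
\begin{equation*}
3(D_i-D_g)\,\sup_{u\in(\alpha,1)}(u-\alpha)\,u\,(1-u)\;\le\;D_i.
\end{equation*}
Unlike in Lemma~\ref{L:S1}, the function $(u-\alpha)u(1-u)$ is not monotone on $(\alpha,1)$, so the supremum is not attained at an endpoint. However, I do not need to compute the maximizer: a crude pointwise estimate suffices. Using $u(1-u)\le 1/4$ together with $u-\alpha<1-\alpha<2/3$ (which is where Lemma~\ref{L:THRES}'s ingredient $\alpha>1/3$, stemming from $D_i>4D_g$, enters crucially), I get
\begin{equation*}
\sup_{u\in(\alpha,1)}(u-\alpha)\,u\,(1-u)\;<\;\frac{1}{6}.
\end{equation*}

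Plugging this bound in reduces the required inequality to $(D_i-D_g)/2\le D_i$, i.e.\ $D_g\ge -D_i$, which is trivially true since $D_g\ge 0$. Hence $c_2\le c^*$, and so for every $c\ge c^*$ the slope $\mu_2=-c/2$ realizes (\ref{condition_wow2}) on the whole interval $u\in(\alpha,1)$. The main obstacle in this proof — really the only nontrivial point — is recognising that one must not chase the exact maximiser of the cubic $(u-\alpha)u(1-u)$; a loose product-of-sup-norms estimate, combined with the structural bound $\alpha>1/3$ from the hypothesis $D_i>4D_g$, is both sufficient and the cleanest route.
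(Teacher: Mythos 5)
Your proof is correct and follows essentially the same route as the paper: choose $\mu_2=-c/2$, reduce to bounding $3(D_i-D_g)\sup_{u\in(\alpha,1)}(u-\alpha)u(1-u)$ by $D_i$, and estimate the cubic crudely via $u(1-u)\le 1/4$ and $u-\alpha<2/3$, exactly as in the paper's argument that $3u(1-u)(u-\alpha)<1/2<D_i/(D_i-D_g)$. The only quibble is that the bound $\alpha>1/3$ is not actually ``crucial'' here --- even $u-\alpha<1$ would give $3(D_i-D_g)/4<D_i$ --- but this does not affect the validity of the proof.
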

\begin{proof}
The proof of Lemma~\ref{L:S2} is analogous to the proof of Lemma~\ref{L:S1} and we will omit some of the details. Again, there exist a lower bound  
\begin{equation} \nonumber
    c_2 
    =2 \sqrt{3(D_i-D_g)} \sup_{u\in(\alpha,1)}\sqrt{(u-\alpha)R(u)},
\end{equation}
such that (\ref{condition_wow2}) holds for $c\ge c_2$. Next, we show that $c_2<c^*$. That is, we show that
\begin{equation}
\nonumber
    2\sqrt{\lambda D_i}> 2 \sqrt{3(D_i-D_g)} \sup_{u\in(\alpha,1)}\sqrt{(u-\alpha)R(u)}.
\end{equation}
This is equivalent to proving $D_i/(D_i-D_g)>3u(1-u)(u-\alpha)$
for $u\in(\alpha,1)$. Noticing that $u-\alpha<2/3$, and $u(1-u)\le1/4$, we obtain $3u(1-u)(u-\alpha)<1/2$. Subsequently, we have
\begin{equation}
\nonumber
    \frac{D_i}{D_i-D_g}>1>\frac{1}{2}>3u(1-u)(u-\alpha),
\end{equation}
since $D_i>4D_g$ by assumption. Thus, $c_2<c^*$.$\hfill\square$
\end{proof}

Knowing that for $c\ge c^*$ the inequality (\ref{condition_wow2}) is valid, trajectories on $l_2$ in between $\alpha$ and $\beta$ point into region $\mathcal{R}_2$. Thus, based on the Poincar\'{e}-Bendixson theorem \citep{jordan1999nonlinear}, the trajectory leaving from the equilibrium point $(\alpha,0)$ with increasing $u$ and increasing $p$ must connect with the equilibrium point $(\beta,0)$. Analogously, the trajectory leaving from the equilibrium point $(1,0)$ with decreasing $u$ and decreasing $p$ must connect with the equilibrium point $(\beta,0)$.

In summary, for $c\ge c^*$ there exist heteroclinic orbits connecting $(1,0)$ to $(\beta,0)$, $(\alpha,0)$ to $(\beta,0)$ and $(\alpha,0)$ to $(0,0)$ in system (\ref{ODEsystem_desingularised_1}). Since trajectories in $u\in(0,\alpha)\cup(\beta,0)$ in system (\ref{ODEsystem_desingularised_0}) are the same, and have the same orientation, as in system (\ref{ODEsystem_desingularised_1}), there exist trajectories connecting $(1,0)$ to the hole in the wall $(\beta,0)$ and trajectories connecting the hole in the wall $(\alpha,0)$ to $(0,0)$ in system (\ref{ODEsystem_desingularised_0}). For $u\in(\alpha,\beta)$, trajectories of system (\ref{ODEsystem_desingularised_0}) move in opposite direction compared to $(\ref{ODEsystem_desingularised_1})$, see Figure~\ref{phaseplane_constrast1}. The trajectory leaving from $(\alpha,0)$ with increasing $u$, positive $p$ and connecting to $(\beta,0)$ in system (\ref{ODEsystem_desingularised_1}) becomes a trajectory leaving from $(\beta,0)$ with decreasing $u$, positive $p$ and connecting to $(\alpha,0)$ in system (\ref{ODEsystem_desingularised_0}). Thus, there exists an orbit connecting $(\beta,0)$ to $(\alpha,0)$ in system (\ref{ODEsystem_desingularised_0}). Combining the above, we get that for $c \ge c^*$, there exists a heteroclinic orbit with $u\ge0$ connecting $(1,0)$ to $(0,0)$ passing through holes in the walls $(\alpha,0)$ and $(\beta,0)$ in system (\ref{ODEsystem_desingularised_0}), however, see Remark~\ref{REM}. Hence, there exist smooth monotone travelling wave solutions of (\ref{RDE_1}) with positive speed $c\ge c^*$. This completes the proof of Theorem~\ref{THEOREM}.

For $2\sqrt{D'(\beta)R(\beta)} < c<c^*$ the equilibrium point $(\beta,0)$ of the desingularised system (\ref{ODEsystem_desingularised_1}) is still a stable node, while $(0,0)$ is a stable spiral, see Lemma~\ref{L:EQ}. We can use similar techniques as above to show that system (\ref{ODEsystem_desingularised_1}) still possesses heteroclinic orbits connecting $(1,0)$ to $(\beta,0)$, $(\alpha,0)$ to $(\beta,0)$ and $(\alpha,0)$ to $(0,0)$, see also Figure \ref{phaseplane_pic3}. However, this latter heteroclinic orbit now spirals into $(0,0)$. Consequently, also for $2\sqrt{D'(\beta)R(\beta)} < c<c^*$ there exists a heteroclinic orbit connecting $(1,0)$ to $(0,0)$ passing through holes in the walls $(\alpha,0)$ and $(\beta,0)$ in system (\ref{ODEsystem_desingularised_0}). However, these correspond to smooth travelling wave solutions of (\ref{RDE_1}) with (\ref{D(u)2}) and (\ref{R(u)2}) that are not monotone and instead oscillate around $0$. These
solutions are not biologically relevant as $U$ represents the population density in the discrete model and thus cannot be negative. 

For $0<c<2\sqrt{D'(\beta)R(\beta)}$, $(\beta,0)$ becomes a stable spiral in (\ref{ODEsystem_desingularised_1}) and hence trajectories in system (\ref{ODEsystem_desingularised_0}) can no longer pass through this hole in the wall, i.e. the hole in the wall is not of the correct type \citep{harley2014existence}. That is, (\ref{RDE_1}) with (\ref{D(u)2}) and (\ref{R(u)2}) do not support
smooth 
travelling wave solutions for $0<c<2\sqrt{D'(\beta)R(\beta)}$. Note that there may exist shock-fronted travelling wave solutions, however, we are not interested in such solutions in this manuscript as $(0,0)$ is still a stable spiral of  (\ref{ODEsystem_desingularised_1}) and thus again yields solutions that are not biologically relevant. See Section~\ref{SS:shocks} for a further discussion related to shock-fronted travelling wave solutions supported by (\ref{RDE_1}).
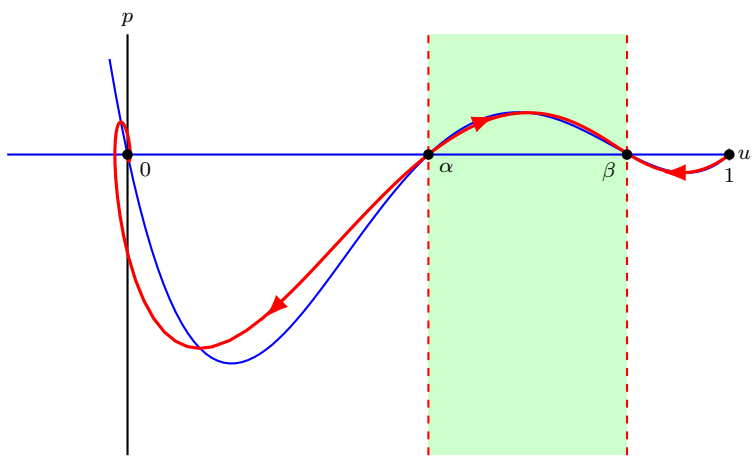
\begin{figure}
\centering
\begin{tikzpicture}[x=8cm,y=80cm, scale=1]
\fill[green!20] (0.5,0.02)--(0.83,0.02)--(0.83,-0.05)--(0.5,-0.05);
\draw[dashed,red,thick] (0.5,-0.05) -- (0.5,0.02); 
\draw[dashed,red,thick] (0.83,-0.05) -- (0.83,0.02); 
\draw[thick] (0,-0.05) -- (0,0.02) node[above,thick] {$p$} coordinate(y axis);
\draw[blue,thick] (-0.2,0) -- (1,0) 
node[black,right,thick] {$u$} coordinate( x axis);
\foreach \pos in {1}  
\draw[shift={(\pos,0)}] (0,0.001) -- (0,-0.001) node[below] {$\pos$};
\draw [color=blue,thick] [label] plot file {Heteroclinicorbit4.dat};
\draw [color=red,very thick] [label] plot file {Heteroclinicorbit1.dat};
\draw [color=red,very thick] [label] plot file {Heteroclinicorbit2.dat};
\draw [color=red,very thick] [label] plot file {Heteroclinicorbit3.dat};
\node[black,below] at (0.03,0) {0};
\node[black,below] at (0.53,0) {$\alpha$};
\node[black,below] at (0.8,0) {$\beta$};
\fill[black] (0,0) circle (2pt);
\fill[black] (0.5,0) circle (2pt);
\fill[black] (0.83,0) circle (2pt);
\fill[black] (1,0) circle (2pt);
\draw[color=red,-{Latex[length=3mm]}] (0.6,0.006) -- (0.61,0.0064);
\draw[color=red,-{Latex[length=3mm]}] (0.9,-0.003) -- (0.89,-0.003);
\draw[color=red,-{Latex[length=3mm]}] (0.24,-0.026) -- (0.23,-0.027);
\end{tikzpicture}
\caption{Phase plane of system (\ref{ODEsystem_desingularised_1}) with parameters $D_i=0.25$, $D_g=0.05$, $\lambda=0.75$ and $c=0.4$. The latter is smaller than $c^*\approx0.866$ but larger than $2\sqrt{D'(\beta)R(\beta)} \approx 0.289$. The blue lines are the nullclines $p=0$ and $p=-D(u)R(u)/c$. The red lines are the heteroclinic orbits connecting $(0,0)$, $(\alpha,0)$, $(\beta,0)$, and $(1,0)$.}
\label{phaseplane_pic3}
\end{figure}

\begin{remark} \label{REM}
It is important to note that combining the three heteroclinic orbits in the desingularised system (\ref{ODEsystem_desingularised_1}) to get the global one in the original system (\ref{ODEsystem_desingularised_0}) is not trivial. Although the relationship between the trajectories, and their orientation, in the two systems is clear, we still need to prove that orbits are able to pass through the holes in the wall in (\ref{ODEsystem_desingularised_0}) by, for instance, using the canard theory \citep{szmolyan2001canards,wechselberger2005existence,wechselberger2012propos}. Roughly speaking, we embed the original ODE (\ref{ODE_1}) into a larger class of problems by adding a higer order perturbation term with a small parameter $0\leq\epsilon\ll1$. Subsequently, rather than obtaining the two-dimensional system (\ref{ODEsystem_desingularised_0}), we have a higher-dimensional system which has a \emph{slow-fast} structure that can be studied by geometric singular perturbation theory \citep{jones1995geometric}. Most notably, the two-dimensional system (\ref{ODEsystem_desingularised_0}) would become the reduced problem of the higher-dimensional system in the singular limit $\epsilon\to0$ and it is constraint on a folded critical manifold. With canard theory we can show the existence of solutions crossing through the holes in the wall (or folded canard points) in the higher-dimensional system for $0\le\epsilon\ll1$.
As this is by now relatively standard and straightforward, we decide to omit the details and instead refer to \citet{szmolyan2001canards,wechselberger2005existence,wechselberger2012propos}, and references therein.
\end{remark}

\section{Stability analysis} \label{S:ST}
We showed that, similar to the Fisher-KPP equation \cite[e.g.]{harley2015numerical}, (\ref{RDE_1}) with (\ref{D(u)2}) and (\ref{R(u)2}) supports smooth travelling wave solutions for $c > 2\sqrt{D'(\beta)R(\beta)} $, but that only the travelling wave solutions with $c \geq c^*$ (\ref{minimumwave}) have nonnegative densities.
The minimal wave speed for the Fisher-KPP equation is closely related to the onset of absolute instabilities\footnote{Note that there are several other ways, for instance with sub-solutions \citep{larson1978transient}, to show that the minimal wave speed for the Fisher-KPP equation is $c^*$.}.
Roughly speaking, absolute instabilities imply that perturbations to a travelling wave solution (in an appropriate Sobolev space that will be discussed further on) will grow for all time and at every point in space \citep{sherratt2014mathematical}. 
These instabilities are related to the absolute spectrum of the linear operator associated to the travelling wave solution and is fully determined by the asymptotic behaviour ($z \to \pm \infty$) of the travelling wave solution \citep{kapitula2013spectral,sandstede2002stability}.
Note that the absolute spectrum is, strictly speaking, not part of the spectrum of the linear operator. However, it gives an indication on how far the essential spectrum can be shifted to the left upon using a weighted Sobolev space \citep{kapitula2013spectral,sandstede2002stability}. Consequently, if parts of the absolute spectrum lie in the right half plane, then the essential spectrum cannot be fully weighted into the open left half plane, and the associate solution is hence absolutely unstable\footnote{See the introduction of \cite{davis2017absolute} for definitions, and an explicit computation, of the absolute spectrum for the Fisher-KPP equation.}.
The travelling wave solutions of (\ref{RDE_1}) with (\ref{D(u)2}) and (\ref{R(u)2}) as constructed in Section~\ref{S:EXIST} asymptote to $0$ and $1$ and the nonlinear diffusivity function $D(U)$ is positive near $U=0$ and $U=1$, see (\ref{signcondition1}). That is, near these points (\ref{RDE_1}) with (\ref{D(u)2}) and (\ref{R(u)2}) has a Fisher-KPP imprint and we therefore expect that the minimal wave speed $c^*$ of (\ref{RDE_1}) is also closely related to the onset of absolute instabilities. In other words, we expect that the travelling wave solutions of (\ref{RDE_1}) with (\ref{D(u)2}) and (\ref{R(u)2}) are absolutely unstable for $2\sqrt{D'(\beta)R(\beta)} < c<c^*$. Therefore, we expect perturbations to these travelling wave solutions to always grow and we will never observe them in, for instance, numerical simulations. Consequently, while (\ref{RDE_1}) with (\ref{D(u)2}) and (\ref{R(u)2}) support these biological irrelevant travelling wave solutions that go negative, they will never be observed and thus do not effect the feasibility of the model.

Below, we briefly describe how to determine the absolute spectrum of a travelling wave solution. For a more detailed and complete mathematical description, we refer to \cite{davis2017absolute,kapitula2013spectral} and \cite{sandstede2002stability}.
To determine the absolute spectrum of a travelling wave solution $\hat{u}(z)$, we add a small perturbation $q(z,t)$ to the travelling wave solution and determine how this perturbation evolves under the PDE in its moving frame. 
That is, we substitute $u(z,t)=\hat{u}(z)+q(z,t)$ into (\ref{stabilitysec2}) and, upon ignoring higher-order perturbative terms $\mathcal{O}(q^2)$, we get 
\begin{equation} \label{L} 
\begin{aligned}
\frac{\partial q}{\partial t}=\mathcal L q\,, \qquad \mathcal{L} := D(\hat{u})\frac{\partial ^2}{\partial z^2}+\left(2D'(\hat{u})\frac{d \hat{u}}{d z}+c\right)\frac{\partial }{\partial z}+\left(D'(\hat{u})\frac{d^2\hat{u}}{d z^2}+D''(\hat{u})\left(\frac{d \hat{u}}{d z}\right)^2+R'(\hat{u})\right).
\end{aligned}
\end{equation}
The associated eigenvalue problem, which is obtained by setting $q(z,t)=e^{\Lambda t}q(z)$, is given by
\begin{equation} \label{EIG}
    \mathcal{L} q=\Lambda q.
    \end{equation} 
The spectral stability of the travelling wave solution $\hat{u}$ is now determined by the spectrum of the linear operator $\mathcal{L}$, that is, the $\Lambda \in \mathbb{C}$ for which $\mathcal{L} - \Lambda$ is not invertible. By translation invariance $0$ is always an eigenvalue (with eigenfunction $\hat{u}'$) and we call the travelling wave solution $\hat{u}$ spectrally stable if the nonzero spectrum is in the open left half plane and unstable otherwise. This spectrum naturally breaks up into two sets, the
point spectrum and the essential spectrum \citep{kapitula2013spectral,sandstede2002stability}. Roughly speaking, the essential spectrum of the travelling wave solution deals with instabilities at infinity and it is related to the spectrum of the background linear operator $\mathcal{L}$ as $z \to \pm \infty$, while the point spectrum deals with the stability of the actual wave front. 
 
Obviously, the spectral properties of $\mathcal L$ depend on the space we allow the perturbations $q$ to be taken from. A natural choice is the space of square integrable functions whose first (weak) derivative (in $z$) is also square integrable, that is, the Sobolev space $\mathbb{H}^1(\mathbb{R})$. Another choice is the related one-sided weighted space $\mathbb{H}^1_{\nu}(\mathbb{R})$ defined as $q \in \mathbb{H}^1_{\nu}(\mathbb{R})$ if and only if $e^{\nu z}q \in \mathbb{H}^1(\mathbb{R})$ \citep{kapitula2013spectral,sattinger1977weighted}. For positive $\nu$ the weight forces $q$ to decay at a rate faster than $e^{-\nu z}$ as $z\to\infty$, while it is allowed to grow exponentially, but at a rate less than $e^{-\nu z}$, as $z\to-\infty$. That is, the weight provides information whether the travelling wave solution is more sensitive to perturbations at plus or minus infinity \citep{davis2017absolute}. 
The weighting of $\mathbb{H}^1(\mathbb{R})$ shifts the essential spectrum \citep{kapitula2013spectral}. That is, a travelling wave solution can be unstable with respect to perturbations in $\mathbb{H}^1(\mathbb{R})$, while it is stable with respect to perturbations in an appropriately weighted space $\mathbb{H}_{\nu}^1(\mathbb{R})$. This is, for instance, the case for the Fisher-KPP equation and a particular Keller-Segel model \citep{davis2017absolute,davis2018spectral}. The absolute spectrum of a travelling wave solution is not affected by the weighting of the space and gives an indication on how far the essential spectrum can be weighted (as the absolute spectrum is always to the left of the rightmost boundary of the essential spectrum \citep{davis2017absolute}). In other words, if the absolute spectrum of a travelling wave solution contains part of the right half plane then the essential spectrum cannot be weighted into the open left half plane and the travelling wave solution is said to be absolutely unstable.

The eigenvalue problem (\ref{EIG}) can be written as a system of first order ODEs 
\begin{equation} \nonumber
    \mathcal{T}(\Lambda)\left(\begin{aligned}
    &q\\
    &s
    \end{aligned}\right) :=\left(\frac{d}{dz}-A(z;\Lambda)\right)\left(\begin{aligned}
    &q\\
    &s
    \end{aligned}\right) = 0\,,
\quad
{\rm where} \quad
    A(z;\Lambda):=\begin{pmatrix}
    0&1\\
    \mathcal{B}&\mathcal{C}
      \end{pmatrix},
\end{equation}
with
\begin{equation}
\nonumber
    \begin{aligned}
    &\mathcal{B}=-\frac{1}{D(\hat{u})}\left(D'(\hat{u})\frac{d^2\hat{u}}{d z^2}+D''(\hat{u})\left(\frac{d \hat{u}}{d z}\right)^2+R'(\hat{u})-\Lambda\right),
    &\mathcal{C}=-\frac{1}{D(\hat{u})}\left(2D'(\hat{u})\frac{d \hat{u}}{d z}+c\right).
    \end{aligned}
\end{equation}
The unweighted essential spectrum and the absolute spectrum of the operator $\mathcal{L}$ are determined by the asymptotic behaviour of the operator $\mathcal{T}(\Lambda)$ since the operator is a relatively compact perturbation of the operator when you plug in $z = \pm \infty$ \citep{kapitula2013spectral}. Therefore, we define the asymptotic matrices
\begin{equation}
\nonumber
    A_+(\Lambda):=\lim_{z\to+\infty}A(z,\Lambda)=
   \begin{pmatrix}
    0&1\\
    \dfrac{-R'(0)+\Lambda}{D(0)}&-\dfrac{c}{D(0)}
   \end{pmatrix},
\end{equation}
and
\begin{equation}
\nonumber
    A_-(\Lambda):=\lim_{z\to-\infty}A(z,\Lambda)=
     \begin{pmatrix}
    0&1\\
    \dfrac{-R'(1)+\Lambda}{D(1)}&-\dfrac{c}{D(1)}
   \end{pmatrix}.
\end{equation}
More specifically, for the problem at hand the boundary of the unweighted essential spectrum of $\mathcal{L}$ is determined by those $\Lambda$ for which $A_{\pm}(\Lambda)$ has a purely imaginary eigenvalue.

In contrast, the
absolute spectrum at $\pm \infty$ is determined by those $\Lambda$ for which the eigenvalues of $A_{\pm}(\Lambda)$ have the same real part \citep{sandstede2002stability}.
The eigenvalues of $A_+$ are
\begin{equation}
\label{mu+}
    \mu_{+}^{\pm}=\frac{-c\pm\sqrt{c^2-4D(0)R'(0)+4D(0)\Lambda}}{2D(0)},
\end{equation}
and those of $A_{-}$ are
\begin{equation}
\label{mu-}
   \mu_{-}^{\pm}=\frac{-c\pm\sqrt{c^2-4D(1)R'(1)+4D(1)\Lambda}}{2D(1)}.
\end{equation}
Hence, the boundary of the unweighted essential spectrum is given by the so-called dispersion relations
\begin{equation} \nonumber
    \Lambda_+=-D(0)k^2+ick+R'(0), \quad {\rm and} \quad  \Lambda_-=-D(1)k^2+ick+R'(1),
\end{equation}
where $k\in\mathbb{R}$ and where $\mu_\pm^+=ik$ are the purely imaginary spatial eigenvalue of $A_\pm$.
These dispersion relations form two parabolas, opening leftward and intersecting the real axis at $R'(0)=\lambda>0$ and $R'(1)=-\lambda<0$, see Figure \ref{essentialspectrum1}. That is, all travelling wave solutions of (\ref{RDE_1}) with (\ref{D(u)2}) and (\ref{R(u)2}) have unweighted essential spectrum in the right half plane.
\begin{figure}
\centering
\begin{tikzpicture}[x=0.9cm,y=1cm, scale=1]
\fill[green!20] (1,0)--(0.91,-0.3)--(0.75,-0.5)--(0.51,-0.7)--(0.36,-0.8)--(-0.21,-1.1)--(-0.69,-1.3)--(-1.89,-1.7)--(-3,-2)--(-5,-2)--(-3.89,-1.7)--(-2.69,-1.3)--(-2.21,-1.1)--(-1.64,-0.8)--(-1.49,-0.7)--(-1.25,-0.5)--(-1.09,-0.3)--(-1,0)--(-1.09,0.3)--(-1.25,0.5)--(-1.49,0.7)--(-1.64,0.8)--(-2.21,1.1)--(-2.69,1.3)--(-3.89,1.7)--(-5,2)--(-3,2)--(-1.89,1.7)--(-0.69,1.3)--(-0.21,1.1)--(0.36,0.8)--(0.51,0.7)--(0.75,0.5)--(0.91,0.3);
\draw [dashed,color=blue,very thick] [label] plot file {ess1.dat};
\draw [color=blue,very thick] [label] plot file {ess2.dat};
\draw[] (0,-2) -- (0,2) node[above,very thick] {$\Im(\Lambda)$} coordinate(y axis);
\draw[] (-5,0) -- (2,0) 
node[right,very thick] {$\mathfrak{R}(\Lambda)$} coordinate( x axis);
\draw[color=red,very thick] (-5,0) -- (-3,0);
\draw[dashed,color=red,very thick] (-2,0) -- (-5,0);
\fill[red] (-3,0) circle (3pt);
\fill[red] (-2,0) circle (3pt);
\fill[blue] (-1,0) circle (3pt);
\fill[blue] (1,0) circle (3pt);
\node [black,above] at (-3.5,0) {$\sigma^-_{\text{abs}}$};
\node [black,above] at (-2.5,0) {$\sigma^+_{\text{abs}}$};
\node [black,below] at (1.15,0) {$\lambda$};
\node [black,below] at (-0.8,0) {$-\lambda$};
\node [black,below] at (-1.8,0) {$K_+$};
\node [black,below] at (-2.8,0) {$K_-$};
\node [black,below] at (-1.5,-2) {(a)};
\end{tikzpicture}
\hfill
\begin{tikzpicture}[x=1.26cm,y=1.33cm, scale=1]
\fill[green!20] (-4.75,-1.5)--(-3.94,-1.2)--(-3.31,-0.9)--(-2.86,-0.6)--(-2.59,-0.3)--(-2.5,0)--(-2.59,0.3)--(-2.86,0.6)--(-3.31,0.9)--(-3.94,1.2)--(-4.75,1.5)--(-2,1.5)--(-2,-1.5);
\draw [dashed,color=blue,very thick] (-2,-1.5)--(-2,1.5);
\draw [color=blue,very thick] plot file {ess3.dat};
\draw[] (0,-1.5) -- (0,1.5) node[above,very thick] {$\Im(\Lambda)$} coordinate(y axis);
\draw[] (-4.75,0) -- (0.25,0) 
node[right,very thick] {$\mathfrak{R}(\Lambda)$} coordinate( x axis);
\draw[color=red,very thick] (-4.75,0) -- (-3,0);
\draw[dashed,color=red,very thick] (-2,0) -- (-4.75,0);
\fill[red] (-3,0) circle (3pt);
\fill[red] (-2,0) circle (3pt);
\fill[blue] (-2.5,0) circle (3pt);
\node [black,above] at (-3.5,0) {$\sigma^-_{\text{abs}}$};
\node [black,above] at (-2.25,0) {$\sigma^+_{\text{abs}}$};
\node [black,below] at (-1.4,0) {$K_+^{\nu}=K_+$};
\node [black,below] at (-2.8,0) {$K_-$};
\node [black,below] at (-2.3,0) {$K_-^{\nu}$};
\node [black,below] at (-2.25,-1.5) {(b)};
\end{tikzpicture}
\caption{(a) shows the unweighted essential spectrum and the absolute spectrum of the linear operator $\mathcal{L}$ for $c>c^*$. The boundary of the unweighted essential spectrum is determined by the dispersion relations of $A_+$ (dashed blue curve) and $A_-$ (solid blue curve) and the green region is the interior of the unweighted essential spectrum.
The solid red line is the absolute spectrum $\sigma^+_{\text{abs}}$ (\ref{ab-}), while the dashed red line is the absolute spectrum $\sigma^+_{\text{abs}}$ (\ref{ab+}). (b)~shows that the unweighted essential spectrum is, for a weight $\nu=c/(2D(0))$ with $c\ge c^*$, shifted to the rightmost boundary of the absolute spectrum $\sigma^+_{\text{abs}}$. 
}
\label{essentialspectrum1}
\end{figure}
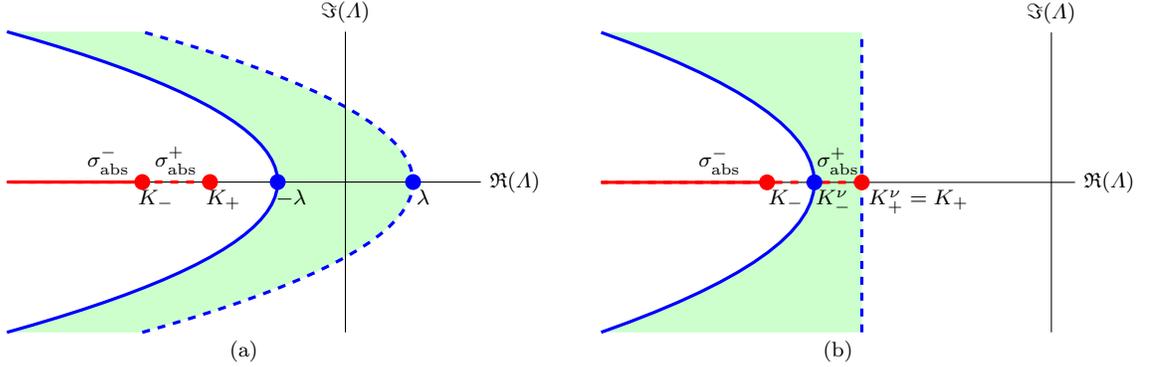

From (\ref{mu+}) we get that the absolute spectrum at $+\infty$ is given by
\begin{equation} \label{ab+}
    \sigma_{\text{abs}}^+=\left\{\Lambda\in\mathbb{R}\ \bigg|\ \Lambda<-\frac{c^2}{4D(0)}+R'(0) = -\frac{c^2}{4D_i}+\lambda =: K_+ \right\}.
\end{equation}
Similarly, from (\ref{mu-}) we get that the absolute spectrum at $-\infty$ is given by
\begin{equation} \label{ab-}
     \sigma_{\text{abs}}^-=\left\{\lambda\in\mathbb{R}\ \bigg|\ \Lambda<-\frac{c^2}{4D(1)}+R'(1) = -\frac{c^2}{4D_g}-\lambda =: K_-\right\}.
\end{equation}
That is, $\sigma_{\text{abs}}^-$ is always fully contained in the open left half plane including the origin, while $\sigma_{\text{abs}}^+$ is only fully contained in the open left half plane including the origin for $c\geq c^*=2\sqrt{\lambda D_i}$, see Figure \ref{essentialspectrum1}. 

The essential spectrum in the weighted space $\mathbb{H}^1_\nu(\mathbb{R})$ is determined by 
the operator 
\begin{equation} \nonumber
    \mathcal{T}^\nu(\Lambda)\left(\begin{aligned}
    &q\\
    &s
    \end{aligned}\right) :=\left(\frac{d}{dz}-\left(A(z;\Lambda)+\nu I\right)\right)\left(\begin{aligned}
    &q\\
    &s
    \end{aligned}\right) = 0\,,
\end{equation}
see \citep{kapitula2013spectral},
and the weighted asymptotic matrices are
\begin{equation}
\nonumber
    A_+^\nu(\Lambda)=A_+(\Lambda) + \nu I=
   \begin{pmatrix}
    \nu&1\\
    \dfrac{-R'(0)+\Lambda}{D(0)}&-\dfrac{c}{D(0)}+\nu
   \end{pmatrix},
\end{equation}
and
\begin{equation}
\nonumber
    A_-^\nu(\Lambda)=A_-(\Lambda) + \nu I=
     \begin{pmatrix}
    \nu&1\\
    \dfrac{-R'(1)+\Lambda}{D(1)}&-\dfrac{c}{D(1)}+\nu
   \end{pmatrix}.
\end{equation}
Hence, the boundary of the essential spectrum in the weighted space is given by the dispersion relations
\begin{equation} \nonumber
\begin{aligned}
&\Lambda_+^\nu=-D(0)k^2+i(c-2D(0)\nu)k+D(0)\nu^2-c\nu+R'(0),\\
&\Lambda_-^\nu=-D(1)k^2+i(c-2D(1)\nu)k+D(1)\nu^2-c\nu+R'(1).
\end{aligned}
\end{equation}
These dispersion relations still form two parabolas opening leftward and the intersections with the real axis now depend on $\nu$. 
We define the intersection of $\Lambda_+^\nu$ with the real axis as $K_+^{\nu}:=D(0)\nu^2-c\nu+R'(0)$, and the intersection of $\Lambda_-$ on the real axis as $K_-^{\nu}:=D(1)\nu^2-c\nu+R'(1)$. 
For $2\sqrt{D'(\beta)R(\beta)}<c<c^*$, $K_+^{\nu}$ is positive for all weights $\nu$, that is, $\Lambda_+^\nu$ always has a positive intersection on the real axis. 
In other words, for $2\sqrt{D'(\beta)R(\beta)}<c<c^*$ and in any weighted space $\mathbb{H}^1_\nu(\mathbb{R})$, parts of the boundary of the weighted essential spectrum lie in the open right half plane and the associated travelling wave solution is hence absolutely unstable.
For speed $c\ge c^*$, there exists a range of weights 
\begin{equation}
\label{RANGE}\nu\in\left(\frac{c-\sqrt{c^2-(c^*)^2}}{2D(0)},\frac{c+\sqrt{c^2+(c^*)^2}}{2D(0)}\right) \end{equation}
such that $K_+^\nu<0$, that is, $\Lambda_+$ has a negative intersection with the real axis. 
Furthermore, $K_-^\nu<K_+^\nu$. Therefore, for $c\ge c^*$, the unweighted essential spectrum is shifted into the open left half plane for weights in the above range \eqref{RANGE}. Furthermore, when $\nu=c/(2D(0))$, $K_+^\nu$ reaches its minimum, which coincides with $K_+$, the rightmost boundary of the absolute spectrum $\sigma^+_{abs}$~\eqref{ab+}. 
Note that $\nu=c/(2D(0))$ is the ideal one-sided weight \citep{davis2017absolute}, i.e. the weight that shifts the right most boundary of the essential spectrum furthest into the left half plane (since $\sigma^+_{abs}$ is to the right of $\sigma^-_{abs}$). See Figure~\ref{essentialspectrum1}. 

In conclusion, a travelling wave solution with speed $2\sqrt{D'(\beta)R(\beta)}<c<c^*$ is absolutely unstable and no weights exist to shift its unweighted essential spectrum into the open left half plane. In contrast, the absolute spectrum of a travelling wave solution with speed $ c\geq c^*$ is fully contained in the open left half plane including the origin and weights can be found that shift the unweighted essential spectrum into this region.

\section{Summary and future work}
\label{S:SUM}
\subsection{Summary of results}
We started this manuscript with a lattice-based discrete model introduced in \cite{johnston2017co} that explicitly accounts for  differences in individual and collective cell behaviour. Based on \cite{johnston2017co}, the discrete model has the continuous description (\ref{RDE_1}) obtained by using truncated Taylor series in the continuum limit. Our analysis focused on the case where $D_i>4D_g$ so that we can obtain a convex nonlinear diffusivity function $D(U)$, given by $(\ref{D(u)2})$, which changes sign twice in our domain of interest. Furthermore, the assumption of equal proliferation rates and zero death rates leads to a logistic kinetic term $R(U)$, given by (\ref{R(u)2}). The associated numerical simulations of (\ref{RDE_1}) with (\ref{D(u)2}) and (\ref{R(u)2}), see Figure \ref{travellingwavesolutionpicture1}, provided evidence of the existence of smooth monotone travelling wave solutions. To study these travelling wave solutions of (\ref{RDE_1}), we used a travelling wave coordinate $z=x-ct$ and looked for stationary solutions in the moving frame. Consequently, (\ref{RDE_1}) was transformed into the singular second-order ODE (\ref{ODE_1}) which we transformed into a singular system of first-order ODEs (\ref{ODEsystem_desingularised_0}). To remove the singularities, we used the stretched variable $D(u)d\xi=dz$ and transformed (\ref{ODEsystem_desingularised_0}) into system (\ref{ODEsystem_desingularised_1}). Next, we analysed the phase plane of the desingularised system (\ref{ODEsystem_desingularised_1}) and proved the existence of heteroclinic orbits connecting the equilibrium points $(0,0),(\alpha,0),(\beta,0)$ and $(1,0)$ for wave speeds $c\ge c^*$, given by (\ref{minimumwave}). Subsequently, based on the relation between the phase planes of (\ref{ODEsystem_desingularised_0}) and (\ref{ODEsystem_desingularised_1}), we proved the existence of a heteroclinic orbit in (\ref{ODEsystem_desingularised_0}) connecting the equilibrium points $(1,0)$ and $(0,0)$ passing through $(\alpha,0)$ and $(\beta,0)$, that are special points on the phase plane called a hole in the wall of singularities. That is, we proved the existence of smooth monotone travelling wave solutions of (\ref{RDE_1}) for $c\ge c^*$. In the end, we showed that the travelling wave solutions of (\ref{RDE_1}) with wave speeds $c<c^*$ are absolutely unstable, which in turn explained that the numerical simulations only provided travelling wave solutions with wave speeds $c\ge c^*$.

Based on our analysis, one-dimensional agent density profiles in the discrete model will eventually spread with a speed $c\ge c^*$ if the two types of agents have equal proliferation rates, zero death rates and different diffusivities satisfying $D_i>4D_g$. Notice that $c^*=2\sqrt{\lambda D_i}$, hence, the lowest speed for the travelling wave only relates to the diffusivity of individuals and is independent of the diffusivity of the gouped agents. That is, the diffusivity of grouped agents which is smaller than that of isolated agents ($D_i>4D_g$) does not give restrictions for the lowest speed of the moving front. Consequently, we infer that the speed of invasion processes for organisms, for instance, cells, is mainly determined by the behaviour of individuals. Furthermore, the Fisher-KPP equation also has a minimum wave speed for the existence of smooth monotone travelling wave solutions \citep{ararticle,fife2013mathematical}. Hence, a discrete mechanism of invasion processes considering the differences in individual and collective behaviours can lead to a macroscopic behaviour similar to that observed in the discrete mechanism with no differences in isolated and grouped agents. 
\subsection{Smooth travelling wave solutions for positive $D(U)$}\label{SS:POS}
If $D_i<4D_g$, then the nonlinear diffusivity function $D(U)$ is positive for $U\in[0,1]$, see Figure \ref{appen2}a. Thus the corresponding system of first-order ODEs (\ref{ODEsystem_desingularised_0}) is not singular, and the nullcline $p=-D(u)R(u)/c$ does not cross $u$-axis, see Figure \ref{appen2}b.
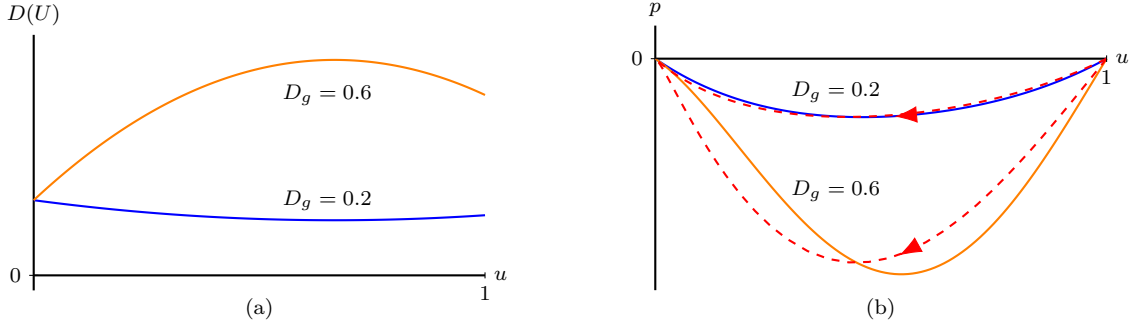
\begin{figure}
\centering
\begin{tikzpicture}[x=30cm,y=20cm, scale=0.2]
\draw[thick] (0,-0.05) -- (0,0.8) node[above,very thick] {$D(U)$} coordinate( y axis);
\draw[thick] (0,0) -- (1,0) 
node[right,very thick] {$u$} coordinate( x axis);
\draw [color=blue,thick] [label] plot file {D_Dg2.dat};
\draw [color=orange,thick] [label] plot file {D_Dg3.dat};
\foreach \pos in {1}
\draw[shift={(\pos,0)}] (0,0.01) -- (0,-0.01) node[below] {$\pos$};
\foreach \pos in {0}  
\draw[shift={(0,\pos)}] (0.01,0) -- (-0.01,0) node[left] {$\pos$};
\node [black] at (0.65,0.25) {$D_g=0.2$};
\node [black] at (0.65,0.6) {$D_g=0.6$};
\node [black,below] at (0.5,-0.05) {(a)};
\end{tikzpicture}
\hfill
\begin{tikzpicture}[x=30cm,y=110cm, scale=0.2]
\draw[thick] (0,-0.14) -- (0,0.02) node[above,very thick] {$p$} coordinate( y axis);
\draw[thick] (0,0) -- (1,0) 
node[right,very thick] {$u$} coordinate( x axis);
\draw [color=blue,thick] [label] plot file {D_Dg_p2.dat};
\draw [color=orange,thick] [label] plot file {D_Dg_p3.dat};
\draw [dashed,color=red,thick] [label] plot file {D_Dg_p2_orbit.dat};
\draw [dashed,color=red,thick] [label] plot file {D_Dg_p3_orbit.dat};
\draw[color=red,-{Latex[length=3mm]}] (0.55,-0.0345) -- (0.53,-0.035);
\draw[color=red,-{Latex[length=3mm]}] (0.56,-0.116) -- (0.54,-0.119);
\foreach \pos in {1}
\draw[shift={(\pos,0)}] (0,0.002) -- (0,-0.002) node[below] {$\pos$};
\foreach \pos in {0}  
\draw[shift={(0,\pos)}] (0.01,0) -- (-0.01,0) node[left] {$\pos$};
\node [black] at (0.4,-0.02) {$D_g=0.2$};
\node [black] at (0.4,-0.08) {$D_g=0.6$};
\node [black,below] at (0.5,-0.14) {(b)};
\end{tikzpicture}
\caption{(a) shows $D(U)$ with $D_i=0.25$ and two different $D_g$. (b) gives the corresponding phase planes of system (\ref{ODEsystem_desingularised_0}) for $\lambda=0.75$, $c=1$, $D_i=0.25$, $D_g=0.2$ and $D_g=0.6$, respectively. The two solid curves are the nullclines $p=-D(u)R(u)/c$ with $D_g=0.2$ (blue curve) and $D_g=0.6$ (orange curve), respectively. The red dashed lines are the corresponding heteroclinic orbits representing travelling wave solutions in (\ref{RDE_1}).}
\label{appen2}
\end{figure}
In other words, $(0,0)$ and $(1,0)$ are the only equilibrium points. Following the same method as applied in Section~\ref{S:EXIST}.2, we obtain the lower bound
\begin{equation}
\nonumber
   S_1=\sup_{u\in(0,1)}2\sqrt{\frac{D(u)R(u)}{u}}=\sup_{u\in(0,1)}2\sqrt{\lambda(1-u)D(u)},
\end{equation}
such that there exist smooth monotone travelling wave solutions of (\ref{RDE_1}) for $c\ge S_1$. The origin is still a stable node for $c\ge2\sqrt{\lambda D_i}:=S_2$ and $S_1\ge S_2$. So, if $S_1\ne S_2$, $c\ge S_1$ is only a sufficient condition because there may exist smooth monotone travelling wave solutions of (\ref{RDE_1}) for wave speeds $S_2\le c< S_1$. Thus, we can only conclude that the minimum wave speed is in the range
\begin{equation}
\label{condition_threshold1}
  S_2\le \hat{c}\le S_1,
\end{equation}
such that there exist smooth monotone nonnegative travelling wave solutions of (\ref{RDE_1}) for $c\ge \hat{c}$. Note that the minimum wave speed $\hat{c}$ can be different from the minimum wave speed $c^*$ in Theorem~\ref{THEOREM}, and Lemma~\ref{L:THRES} does not necessarily hold.

This estimate is consistent with the result in \citet{malaguti2003sharp} obtained by using the \emph{comparison method} introduced by \citet{aronson1978multidimensional}. The corresponding numerical simulations also give the expected results, see Figure \ref{travellingwavesolutionpicture2}. \citet{witelski1994asymptotic} obtained an asymptotic travelling wave solution for a PDE motivated by polymer diffusion with a positive nonlinear diffusivity function and logistic kinetics for wave speeds greater than a minimum wave speed which is greater than $S_2$. This is consistent with the estimate of the minimum wave speed in (\ref{condition_threshold1}). For solutions with an asymptotic wave speed equal to $S_2$, the front of the travelling wave is called a \emph{pulled front}; for solutions with asymptotic speeds greater than $S_2$, the front of the travelling wave is called a \emph{pushed front} \citep{van2003front}. Unravelling the differences in wave speed selection remains to be explored.
\begin{figure}
\centering
\begin{tikzpicture}[x=4cm,y=2cm, scale=1]
\draw[thick] (0,0) -- (0,2) node[above,very thick] {$c$} coordinate(y axis);
\draw[thick] (0,0) -- (1.4,0) 
node[right,very thick] {$\eta$} coordinate( x axis);
\foreach \pos in {0,0.5,1,1.4}  
\draw[shift={(\pos,0)}] (0,0.02) -- (0,-0.02) node[below] {$\pos$};
\draw [color=orange,very thick] [label] plot file {speedwithICs3.dat};
\draw[thick,dashed,red] (0,0.866) -- (1.4,0.866);
\draw[thick,dashed,red] (0,1.1) -- (1.4,1.1);
\node [black,right] at (0.6,0.7) {$S_2=0.866$};
\node [black,right] at (0.6,1.26) {$S_1=1.1$};
\node [black,right] at (1,1.8) {$D_g=0.6$};
\node [black,below] at (0.75,0) {(a)};
\end{tikzpicture}
\hfill
\begin{tikzpicture}[x=4cm,y=2cm, scale=1]
\draw[thick] (0,0) -- (0,2) node[above,very thick] {$c$} coordinate(y axis);
\draw[thick] (0,0) -- (1.4,0) 
node[right,very thick] {$\eta$} coordinate( x axis);
\foreach \pos in {0,0.5,1,1.4}  
\draw[shift={(\pos,0)}] (0,0.02) -- (0,-0.02) node[below] {$\pos$};
\draw [color=blue,very thick] [label] plot file {speedwithICs2.dat};
\draw[thick,dashed,red] (0,0.866) -- (1.4,0.866);
\draw[thick,dashed,red] (0,1.1) -- (1.4,1.1);
\node [black,right] at (0.6,0.7) {$S_2=0.866$};
\node [black,right] at (0.6,1.26) {$S_1=1.1$};
\node [black,right] at (1,1.8) {$D_g=0.2$};
\node [black,below] at (0.75,0) {(b)};
\end{tikzpicture}
\caption{(a) gives the wave speed as a function of the initial condition $U(x,0)=1/2+\text{tanh}\left(-\eta (x-40)\right)/2$. Notice that as $\eta$ grows to infinity this initial condition limits to the Heaviside initial condition. Parameters are $\lambda=0.75$, $D_i=0.25$ and $D_g=0.6$. The wave speed reaches its minimum which is between $S_1$ and $S_2$ and then converges to a bigger value which is still between $S_1$ and $S_2$. In (b), $D_g=0.2$ while the other parameters are the same as in (a). In this case, the wave speed converges to $S_2$.}
\label{travellingwavesolutionpicture2}
\end{figure}
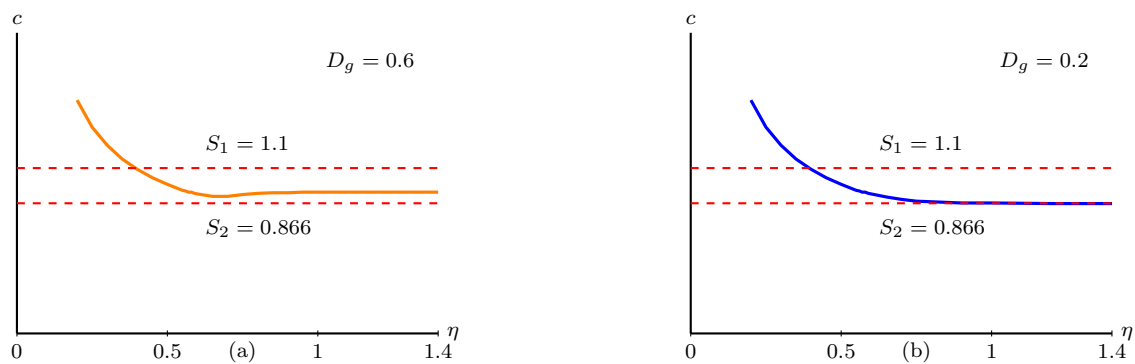
\subsection{Shock-fronted travelling waves} \label{SS:shocks}
In Section~\ref{S:EXIST}, we mainly considered the equilibrium point $(0,0)$ as a stable node in the phase plane of system (\ref{ODEsystem_desingularised_1}). With $(0,0)$ a stable node, $(\beta,0)$ is also a stable node based on (\ref{relation1}). However, (\ref{relation1}) does not hold for any convex $D(U)$ which changes sign twice. For instance, for
\begin{equation}
\label{D(u)3}
    \hat{D}(U)=(U-0.1)(U-0.3),
\end{equation}
condition (\ref{condition2}) and condition (\ref{condition1}) become 
\begin{equation}
    \nonumber
    c\ge 2\sqrt{\hat{D}(0)R'(0)}=0.3,\quad
    c\ge2\sqrt{\hat{D}'(0.3)R(0.3)}\approx0.355.
\end{equation}
With the nonlinear diffusivity function $\hat{D}(U)$, the equilibrium point $(0,0)$ is a stable node and the equilibrium point $(\beta,0)$ is a stable spiral for speeds $0.3<c<0.355$ in (\ref{ODEsystem_desingularised_1}).
In this case, only shock-fronted travelling wave solutions of (\ref{RDE_1}) can exist since (\ref{ODEsystem_desingularised_1}) no longer possesses heteroclinic orbits connecting to $(\beta,0)$ that do not cross the walls of singularities, see Figure \ref{phaseplane_pic5}. 
\begin{figure}
\centering
\begin{tikzpicture}[x=6cm,y=25cm, scale=1]
\fill[green!20] (0.1,0.01)--(0.3,0.01)--(0.3,-0.15)--(0.1,-0.15);
\draw[dashed,red,thick] (0.1,-0.15) -- (0.1,0.01); 
\draw[dashed,red,thick] (0.3,-0.15) -- (0.3,0.01); 
\draw[thick] (0,-0.15) -- (0,0.01) node[above,thick] {$p$} coordinate(y axis);
\draw[blue,thick] (0,0) -- (1,0) 
node[black,right,thick] {$u$} coordinate( x axis);
\draw [color=blue,thick] [label] plot file {Heteroclinicorbit8.dat};
\draw [color=red,very thick] [label] plot file {Heteroclinicorbit5.dat};
\node[black,below] at (-0.03,0) {0};
\node[black,below] at (1.03,0) {1};
\node[black,below] at (0.14,0) {$0.1$};
\node[black,below] at (0.25,0) {$0.3$};
\fill[black] (0,0) circle (2pt);
\fill[black] (0.1,0) circle (2pt);
\fill[black] (0.3,0) circle (2pt);
\fill[black] (1,0) circle (2pt);
\draw[color=red,-{Latex[length=3mm]}] (0.5,-0.089) -- (0.48,-0.086);
\draw[color=red,-{Latex[length=3mm]}] (0.248,-0.028) -- (0.24,-0.025);
\node [black,below] at (0.5,-0.15) {(a)};
\end{tikzpicture}
\hfill
\begin{tikzpicture}[x=6cm,y=25cm, scale=1]
\fill[green!20] (0.1,0.01)--(0.3,0.01)--(0.3,-0.15)--(0.1,-0.15);
\draw[dashed,red,thick] (0.1,-0.15) -- (0.1,0.01); 
\draw[dashed,red,thick] (0.3,-0.15) -- (0.3,0.01); 
\draw[thick] (0,-0.15) -- (0,0.01) node[above,thick] {$p$} coordinate(y axis);
\draw[blue,thick] (0,0) -- (1,0) 
node[black,right,thick] {$u$} coordinate( x axis);
\draw [color=blue,thick] [label] plot file {Heteroclinicorbit8.dat};
\draw [color=red,very thick] [label] plot file {Heteroclinicorbit5.dat};
\node[black,below] at (-0.03,0) {0};
\node[black,below] at (1.03,0) {1};
\node[black,below] at (0.14,0) {$0.1$};
\node[black,below] at (0.25,0) {$0.3$};
\fill[black] (0,0) circle (2pt);
\fill[black] (0.1,0) circle (2pt);
\fill[black] (0.3,0) circle (2pt);
\fill[black] (1,0) circle (2pt);
\draw[color=red,-{Latex[length=3mm]}] (0.5,-0.089) -- (0.48,-0.086);
\draw[color=red,-{Latex[length=3mm]}] (0.24,-0.0255) -- (0.248,-0.028);
\node [black,below] at (0.5,-0.15) {(b)};
\end{tikzpicture}
\caption{(a) shows the phase plane of the desingularised system (\ref{ODEsystem_desingularised_1}) with $\hat{D}(u)$, $c=0.3$ and $\lambda=0.75$. The vertical dashed lines are the wall of singularities at $u=0.1$ and $u=0.3$. The blue lines are the nullclines $p=0$ and $p=-D(u)R(u)/c$. The red line is the heteroclinic orbit connecting $(1,0)$ to $(0.3,0)$. (b) shows the phase plane of system (\ref{ODEsystem_desingularised_0}) with $\hat{D}(u)$, $c=0.3$ and $\lambda=0.75$. The vertical dashed lines are the walls of singularities $u=0.1$ and $u=0.3$. The blue lines are the nullclines $p=0$ and $p=-D(u)R(u)/c$. The red line shows the orientation of the same trajectory in (a) on different sides of the wall of singularities $u=0.3$.}
\label{phaseplane_pic5}
\end{figure}
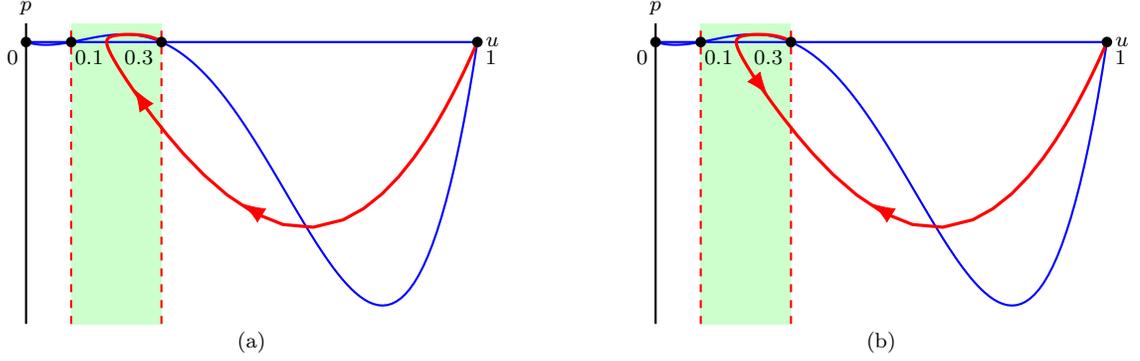
The corresponding numerical simulation of (\ref{RDE_1}) indeed gives a shock-fronted travelling wave solution with a speed $c=0.3$, see Figure \ref{phaseplane_pic4}. 

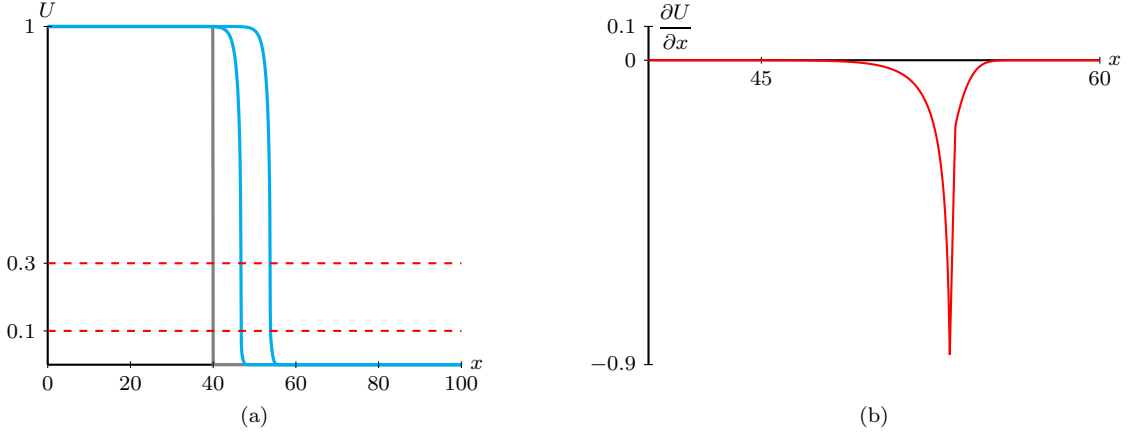
\begin{figure}
\centering
\begin{tikzpicture}[x=0.055cm,y=4.5cm, scale=1]
\draw[thick] (0,0) -- (0,1) node[above,very thick] {$U$} coordinate(y axis);
\draw[thick] (0,0) -- (100,0) 
node[right,very thick] {$x$} coordinate( x axis);
\draw [color=gray,very thick] [label] plot file {travellingwavesolution_1.dat};
\draw [color=cyan,very thick] [label] plot file {travellingwavesolution_6.dat};
\draw [color=cyan,very thick] [label] plot file {travellingwavesolution_7.dat};
\foreach \pos in {0,20,40,60,80,100}
\draw[shift={(\pos,0)}] (0,0.01) -- (0,-0.01) node[below] {$\pos$};
\foreach \pos in {0.1,0.3,1}  
\draw[shift={(0,\pos)}] (1,0) -- (-1,0) node[left] {$\pos$};
\draw[thick,dashed,red] (0,0.3) -- (100,0.3);
\draw[thick,dashed,red] (0,0.1) -- (100,0.1);
\node [black,below] at (50,-0.1) {(a)};
\end{tikzpicture}
\hfill
\begin{tikzpicture}[x=0.3cm,y=4.5cm, scale=1]
\draw[thick] (40,-0.9) -- (40,0.1) node[right,thick] {$\dfrac{\partial U}{\partial x}$} coordinate(y axis);
\draw[thick] (40,0) -- (60,0) 
node[black,right,thick] {$x$} coordinate( x axis);
\foreach \pos in {45,60}
\draw[shift={(\pos,0)}] (0,0.01) -- (0,-0.01) node[below] {$\pos$};
\foreach \pos in {0.1,0,-0.9}
\draw[shift={(40,\pos)}] (0.2,0) -- (-0.2,0) node[left] {$\pos$};
\draw [color=red,thick] [label] plot file {pplaneshock.dat};
\node [black,below] at (50,-1) {(b)};
\end{tikzpicture}
\hfill
\caption{(a) shows the evolution of a Heaviside initial condition to a smooth travelling wave solution obtained by simulating (\ref{RDE_1}) with (\ref{D(u)3}) and (\ref{R(u)2}) with $\lambda=0.75$ at $t=0$, $t=25$ and $t=50$. Notice that $D(U)=0$ at $\alpha=0.1$ and $\beta=0.3$. The travelling wave solution eventually has a constant positive speed, $c=0.3$. (b) shows $\partial U/\partial x$  corresponding to the numerical solution in (a) at $t=50$ and for $x$ between $40$ and $60$.}
\label{phaseplane_pic4}
\end{figure}

It is not a surprise to see shock-fronted travelling wave solutions in negative nonlinear diffusion equations. Shocks in negative nonlinear diffusion equations with no kinetic terms have been studied in the context of many physical phenomena, such as the movement of moisture in partially saturated porous media \citep{dicarlo2008nonmonotonic}; the motion of nanofluids \citep{landman2011terraced} and these kinds of PDEs also arise in the study of Cahn-Hilliard models \citep{witelski1995shocks}. Numerical simulations of (\ref{RDE_1}) with nonlinear diffusivity function $(\ref{D(u)2})$ and Allee kinetics \eqref{R(u)1} also lead to shock-fronted solutions, see \citet{johnston2017co}. In addition, Allee kinetics support shock-fronted travelling wave solutions for reaction-diffusion-advection equations with small diffusion coefficients \citep{sewalt2016influences,wang2018persistence}. The analysis of shock-fronted travelling wave solutions in nonlinear diffusion-reaction equations with generic diffusivity functions and logistic kinetics is left for future work.

\subsection{Point spectrum}
To fully establish spectral stability of the operator $\mathcal{L}$ \eqref{L}, we also need to determine the point spectrum of $\mathcal{L}$ and show that it is contained in the open left half plane including the origin when $c\geq c_*$ provided our perturbations stay in an appropriately chosen Hilbert space $\mathcal{X}$. With this in mind, we define 
\begin{equation}\label{eq:weight}
w(z) := D(\hat{u})q(z) e^{\int c/(2D(\hat{u}(t)))dt}.
\end{equation}
Then if $\mathcal{L} q = \Lambda q$ \eqref{EIG} we have that $w$ will solve 
$$
\mathcal{M}w(z) := D(\hat{u}) w_{zz} +  \left( R'(\hat{u}) - \frac{c\left(c+ 2D'(\hat{u})\hat{u}_z \right)}{4D(\hat{u})}
\right)w(z) = \Lambda w(z). 
$$
We have thus reduced the problem to showing that $\mathcal{M}$ is negative semi-definite on some appropriately chosen Hilbert space $\mathcal{X}$. Unfortunately, the natural choice for such a Hilbert space in these problems is the one with ``inner product" 
$$
(u,v) := \int \frac{u v }{D(\hat{u})} dz, 
$$
but the sign change in $D(\hat{u})$ means that this is actually no longer an inner product (it is strictly negative for a localised pulse near where $D(\hat{u})$ is negative for instance).

However, if we instead work with the desingularised system \eqref{ODEsystem_desingularised_1}, then for a perturbation $\tilde{q}(\xi)$ about one of the three heteroclinic orbits $\tilde{u}$, linearising gives the eigenvalue problem for the linearised desingularised system
\begin{equation}\label{eq:desinglin}
\tilde{q}_{\xi \xi} + c \tilde{q}_{\xi} + F(\tilde{u}) 
\tilde{q} = \Lambda \tilde{q}\,,
\end{equation}
where $F(\tilde{u})$ is defined in \eqref{XJ}.
The standard Liouville transformation $ \tilde{w}(\xi):= \tilde{q}(\xi) e^{c\xi/2}$ now does lead to a self adjoint eigenvalue problem in terms of $\tilde{w}(\xi)$ 
\begin{equation*} 
\tilde{w}_{\xi\xi} +\left(
F(\tilde{u})
- \frac{c^2}{4}\right)\tilde{w}(\xi) = \Lambda \tilde{w}(\xi) \,.
\end{equation*}
Here, one can show explicitly that the operator 
$$
\tilde{\mathcal{M}} := \frac{d^2}{d\xi^2} + \left(F(\tilde{u})- \frac{c^2}{4}\right)
$$
is negative semi-definite precisely when $c\geq c_*$. Indeed, as we are assuming that $D_i>4D_g$, the potential term in $\tilde{\mathcal{M}}$ satisfies 
\begin{equation*}
 \left(
 F(\tilde{u})- \frac{c^2}{4}\right) < \frac{1}{4}\left( -c^2 + \lambda D_i(4 -32 \tilde{u} + 63\tilde{u}^2-36\tilde{u}^3 ) \right)
  \end{equation*}
 and the polynomial term 
 $
 4 -32 \tilde{u} + 63\tilde{u}^2-36\tilde{u}^3 
 $
 has a maximum value of $4$ when $\tilde{u} \in [0,1]$ (at $\tilde{u} = 0$). So, we have that 
 $
F(\tilde{u})- c^2/4 \leq 0 
 $
 when $c\geq c_* = 2 \sqrt{\lambda D_i}$. Thus, $\tilde{\mathcal{M}}$ is a negative semi-definite operator in the space of perturbations which decay faster than $e^{c\xi}$, that is,  $\mathbb{H}^1_{c}$. This is usually referred to as a transient instability in the stability literature \citep{sandstede2002stability, sherratt2014mathematical}.
 
Given what was just shown, the only remaining step in the proof of stability of these travelling wave solutions for $c\geq c_*$ is how to relate the eigenvalue problem of the desingularised system \eqref{eq:desinglin} for the three different heteroclinic orbits to the spectrum of the operator $\mathcal{L}$ for $\hat{u}$. Due to the singular nature of the operator, it is unclear how to even define the ``natural'' Hilbert spaces which should act as domains for the original linearised problem. Further, the weighting given in \eqref{eq:weight} involves a nonlinear, singular exponential weight, and to the best of our knowledge there is no such work which describes the dynamic effects of stability or instability in these cases. So, we cannot even say whether we would have only a transient instability even if we could show that the ``natural'' operator was negative definite on an appropriate domain.

\begin{acknowledgements}
The authors would like to thank PN Davis and M Wechselberger for fruitful discussions. We also thank the two referees for their helpful suggestions.
\end{acknowledgements}

\newpage
\bibliographystyle{spbasic}

\end{document}